\newtheorem{theorem}{Theorem}[section]
\newtheorem{definition}[theorem]{Definition}
\newtheorem{proposition}[theorem]{Proposition}
\newtheorem{remark}[theorem]{Remark}
\newtheorem{lemma}[theorem]{Lemma}
\def \N{\mathbb{N}}
\def \Q{\mathbb{Q}}
\def \R{\mathbb{R}}
\def \Z{\mathbb{Z}}
\begin{document}
	
	\title{Algebraic approximations to linear combinations of $S$-units}
	\author{Parvathi S Nair, Veekesh Kumar and S. S. Rout}%
	\address[Parvathi S Nair]{Department of Mathematics, National Institute of Technology Calicut, 
		Kozhikode-673 601, India.}
	\email[]{parvathisnair60@gmail.com, parvathi\_p220245ma@nitc.ac.in}
	\address[Veekesh Kumar]{Department of Mathematics, Indian Institute of Technology Dharwad, Chikkamalligawad village, Dharwad, Karnataka 580007, India}
	
	\email[]{veekeshk@iitdh.ac.in}
	\address[S. S. Rout]{Department of Mathematics, National Institute of Technology Calicut, 
		Kozhikode-673 601, India.}
	\email[]{sudhansu@nitc.ac.in}
	\subjclass[202 0] {Primary 11J68; Secondary 11J87, 11R06}
	\keywords{algebraic approximation, $S$-units, pseudo-Pisot number, Schmidt's Subspace Theorem.}
	\bigskip
	\begin{abstract} 
		Let $\Gamma\subset \bar{\Q}^{\times}$ be a finitely generated multiplicative group of algebraic numbers, let $\alpha_1,\ldots,\alpha_m$ be non-zero algebraic numbers, and let $\varepsilon >0$ be fixed. In this paper, we prove that there exist only finitely many tuples $(u_1, \ldots, u_m, q, p)\in \Gamma^m\times\mathbb{Z}^2$ with $d = [\mathbb{Q}(u_1, \ldots, u_m):\mathbb{Q}]$ such that for any two tuples $(u_1,\ldots,u_m)$ and $(u'_1,\ldots,u'_m)$, we have $\frac{u_{i_1}}{u_{i_2}}\neq \frac{u'_{i_1}}{u'_{i_2}}$ for $1\leq i_1\neq i_2\leq m$ and it is stable under Galois conjugation over $\Q$, $\max\{|\alpha_1 qu_1|, \ldots, |\alpha_m qu_m|\}>1$, the tuple $(\alpha_1qu_1, \ldots, \alpha_mq u_m)$ is not pseudo-Pisot and \[0< \left|\sum_{i=1}^m \alpha_iq u_i - p\right|<\frac{1}{\left(\prod_{i=1}^mH( u_i)\right)^{\varepsilon} |q|^{md+\varepsilon}},\] where $H(u_i)$ denotes the absolute Weil height. This result extends one of the main results of Corvaja-Zannier \cite{corv}. In addition, we prove a result similar to \cite[Theorem 1.4]{kul} in a more general setting. In our proofs, we exploit the subspace theorem based on the work of Corvaja-Zannier.
	\end{abstract}
	\maketitle
	
	\section{introduction}
	Throughout this paper, $\|x\|$ will denote the distance of the real number $x$ from the nearest integer, that is, 
	\[\|x\| = \min\{|x-n|: n\in \mathbb{Z}\}.\]
	For a given real number $\alpha>1$, the distribution of the sequence $(\|\alpha^n\|)_{n\geq 1}$ is less known. For example, we do not know whether the sequence $(\|(3/2)^n\|)_{n\geq 1}$ is dense in $[0,1/2)$. In 1957, Mahler \cite{mah} proved that if $\alpha=k/\ell$ is a rational number greater than $1$ and not an integer and $\varepsilon$ a positive real number, then the set $\{n\in \N:\ \|\alpha^n\|<\ell^{-\varepsilon n}\}$ is finite. He used a $p$-adic extension of Roth's theorem, due to Ridout \cite{ridd1}. At the end of his paper, Mahler pointed out that the conclusion does not hold if $\alpha$ is the golden ratio. Actually, it is easily seen that a counterexample is provided by any Pisot number. Recall that a Pisot number is a real algebraic integer greater than $1$  all of whose Galois conjugates (except itself) have an absolute value less than $1$. He also asked which algebraic numbers have the same property as the rationals in the theorem.
	
	In the remarkable paper \cite{corv}, Corvaja-Zannier completely solved this question. Precisely, they proved the following: {\em Let $\alpha>1$ be a real algebraic number, and let $0<\ell<1$. Suppose that $\{n\in \N:\ \|\alpha^n\|<\ell^n\}$ is infinite. Then, there is a positive integer $d\geq 1$ such that $\alpha^d$ is a Pisot number}. Their proof rests on a clever application of the Schmidt's subspace theorem. They also obtain a Thue-Roth inequality with ``moving target" which deals with elements from a finitely generated multiplicative group of algebraic numbers. The authors showed that the conclusion of their theorem is not generally true without assuming that $\alpha$ is algebraic. Note that some related results on integer values of certain expressions involving linear recurrence sequences were obtained in \cite{corv1998, corv2002a, corv2002b}. The simultaneous approximation of algebraic numbers has also been studied in the same spirit of Corvaja and Zannier (see, for example, \cite{vek22, vek23}). 
	\smallskip
	
	In $2019$, Kulkarni et al. \cite{kul} extended the result of Corvaja and Zannier \cite[Theorem 1]{corv}, that is, to linear combinations of powers of algebraic numbers and to linear recurrence sequences (note that $(\alpha^n)_{n\geq1}$ is a linear recurrence sequence). Recently, Kumar and Prasad \cite{vp25} strengthened \cite[Theorem 1]{corv} and extended \cite[Theorem 1.4]{kul} to a more general sequence of the form $\{\lambda q_1\alpha_1^n +\cdots+ \lambda q_k\alpha_k^n: n\in \N\}$, where $q_i$ for $i=1, \ldots,k$ are algebraic numbers and $\lambda\in \N$. 
	
	The above mentioned results are not effective since Mahler's proof relies on Roth's theorem, and Corvaja-Zannier and Kulkarni et al. results relies on Schmidt's subspace theorem. However, Baker and Coates \cite{bakercoates} studied the effective version of Mahler's results \cite{mah} using the theory of linear forms in $p$-adic logarithms, and Bugeaud \cite[Theorem 5]{bugeaud} studied the effective version of Corvaja and Zannier's result \cite[Theorem 1]{corv}. 
	\smallskip
	
To state our result,  we need the following definitions.
Let $\mu$ denote the group of roots of unity. For a tuple of non-zero algebraic numbers $(\beta_1, \ldots, \beta_n)$, set
\[P:=\{\beta\in \bar{\Q}^{\times}\setminus\{\beta_1, \ldots, \beta_n\}\mid  \beta = \sigma(\beta_i) \;\mbox{for some}\;\; \sigma\in \mbox{Gal}(\bar{\Q}/\Q), i=1, \ldots, n\}.\]
We call a tuple $(\beta_1, \ldots, \beta_n)$ is {\em pseudo-Pisot} if $\beta_1, \ldots, \beta_n$ are pairwise distinct, $$\sum_{i=1}^n\beta_i+\sum_{\beta\in P} \beta \in \Z$$ and $|\beta|<1$ for every $\beta\in P$. 
	
	
One of the main results of this paper is as follows.
\begin{theorem}\label{maintheorem1} 
Let $\Gamma\subset \bar{\Q}^{\times}$ be a finitely generated multiplicative group of algebraic numbers, let $\alpha_1,\ldots,\alpha_m$ be non-zero algebraic numbers, and let $\varepsilon >0$ be fixed. Then there are only finitely many tuples $(u_1, \ldots, u_m, q, p)\in \Gamma^m\times\mathbb{Z}^2$ with $d = [\mathbb{Q}(u_1, \ldots, u_m):\mathbb{Q}]$ such that 
\begin{enumerate}[label=\textnormal{(\roman*)}]
\item for any two tuples $(u_1,\ldots,u_m)$ and $(u'_1,\ldots,u'_m)$, we have $\frac{u_{i_1}}{u_{i_2}}\neq \frac{u'_{i_1}}{u'_{i_2}}$ for $1\leq i_1\neq i_2\leq m$  and it is stable under Galois conjugation over $\Q$;
\item $\displaystyle\max_{\substack{1\leq i\leq m}} \{|\alpha_i qu_i|\}>1$;
\item The tuple $(\alpha_1qu_1, \ldots, \alpha_mq u_m)$ is not pseudo-Pisot; and
			\item \[0< \left|\sum_{i=1}^m \alpha_iq u_i - p\right|< \frac{1}{(\prod_{i=1}^mH(u_i))^{\varepsilon} |q|^{md+\varepsilon}}.\]
		\end{enumerate}
	\end{theorem}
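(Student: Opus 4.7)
The plan is to argue by contradiction via Schmidt's Subspace Theorem, in the spirit of \cite{corv}. Suppose there are infinitely many tuples $(u_1,\ldots,u_m,q,p)$ fulfilling (i)--(iii). Since $\Gamma$ has a fixed finite generating set $\{\gamma_1,\ldots,\gamma_s\}$, each $u_i$ has the form $\gamma_1^{a_{i1}}\cdots\gamma_s^{a_{is}}$, so the field $F=\Q(u_1,\ldots,u_m)$ takes only finitely many possible values along the sequence. By the pigeonhole principle, pass to an infinite subsequence on which $F$ and its degree $d=[F:\Q]$ are constant. Put $L=F(\alpha_1,\ldots,\alpha_m)$ and let $S$ be a finite set of places of $L$ containing the archimedean places and every non-archimedean place at which some $\gamma_j$ fails to be a unit; then each $u_i\in \mathcal{O}_{L,S}^{\times}$. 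Distinguish the archimedean place $v_0$ corresponding to the identity embedding of $L$, at which the quantity in (iii) is measured.

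Next I would apply the Subspace Theorem to the projective point $\mathbf{x}=(p:qu_1:\cdots:qu_m)\in\mathbb{P}^{m}(L)$. At each place $v\in S$, choose $m+1$ linearly independent linear forms in $X_0,X_1,\ldots,X_m$: at the distinguished place $v_0$ take
\[L_{0,v_0}(\mathbf{X})=X_0-\sum_{i=1}^m\alpha_i X_i,\qquad L_{i,v_0}(\mathbf{X})=X_i\quad(1\le i\le m),\]
and at every other $v\in S$ take $L_{i,v}(\mathbf{X})=X_i$ for $0\le i\le m$. The crux is to show that the double product $\prod_{v\in S}\prod_{i=0}^{m}|L_{i,v}(\mathbf{x})|_v^{d_v}/\|\mathbf{x}\|_v^{d_v}$ is bounded above by $H(\mathbf{x})^{-\delta}$ for some $\delta=\delta(\varepsilon)>0$. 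The form $L_{0,v_0}$ applied to $\mathbf{x}$ is precisely $p-\sum_i\alpha_i qu_i$, which by (iii) is smaller than $1/(H(\mathbf{u})^\varepsilon|q|^{md+\varepsilon})$; for the coordinate forms $X_i$, the product formula for each $qu_i$ together with the $S$-unit property of the $u_i$ and the fact that $p,q\in\Z$ produces a factor of $|q|^{md}$ coming from the $m\cdot d$ archimedean contributions (one for each coordinate $1\le i\le m$ at each of the $d$ archimedean places of $F$) balanced against controlled powers of $H(\mathbf{u})$. After this bookkeeping, the numerator is bounded by roughly $|q|^{md}H(\mathbf{u})^{O(1)}$ divided by the factor from (iii), while the denominator is controlled by a suitable power of $H(\mathbf{x})\ll|q|H(\mathbf{u})^{O(1)}$. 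The slack of $H(\mathbf{u})^\varepsilon|q|^\varepsilon$ coming from (iii) yields the required exponent $-\delta$, and the Subspace Theorem forces all but finitely many $\mathbf{x}$ into one of finitely many proper linear subspaces of $L^{m+1}$.

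The main obstacle, and the final step, is the analysis of these subspace relations and the extraction of a contradiction with hypothesis (ii). Each relation has the shape $c_0 p+c_1 qu_1+\cdots+c_m qu_m=0$ with $(c_0,\ldots,c_m)\in L^{m+1}\setminus\{\mathbf{0}\}$. If $c_0=0$, one obtains a nontrivial $L$-linear dependence among the $u_i$; by the multiplicative independence of the $\gamma_j$ together with their Galois conjugates, combined with a further pigeonhole reduction, this lets one express one variable in terms of the others and induct on $m$. If $c_0\ne 0$, solve for $p$ in the subspace relation and substitute into (iii), reaching a new approximation inequality of the same form but with $\alpha_i$ replaced by $\alpha_i+c_i/c_0$. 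Iterating this construction, and applying every non-trivial $\sigma\in\mathrm{Gal}(\bar{\Q}/\Q)$ to the distinguished approximation, one eventually shows that each Galois conjugate of $\sum_i\alpha_i qu_i$ outside the principal one has absolute value strictly less than $1$ and that the full trace is an integer. This says precisely that $(\alpha_1 qu_1,\ldots,\alpha_m qu_m)$ is pseudo-Pisot, contradicting (ii). This last Galois/combinatorial bookkeeping, closely patterned on the original Corvaja--Zannier argument, is where the principal technical difficulty lies.
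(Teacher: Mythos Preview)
Your Subspace Theorem setup is the wrong one, and this is a genuine gap rather than a bookkeeping issue. With the projective point $\mathbf{x}=(p:qu_1:\cdots:qu_m)\in\mathbb{P}^m(L)$ and only $m+1$ linear forms per place, the product of the coordinate forms $\prod_{v\in S}\prod_{i=1}^m|qu_i|_v$ contributes at most $|q|^{m}$ (since each $u_i$ is an $S$-unit and, with normalized absolute values, $\prod_{v\mid\infty}|q|_v=|q|$), not $|q|^{md}$. Your remark that ``the $d$ archimedean places of $F$ each contribute $|q|^m$'' double-counts the local degrees: once the absolute values are normalized so that the product formula holds, the total archimedean contribution of $|q|$ is exactly $|q|$, regardless of $d$. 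Consequently the factor $|q|^{md}$ coming from hypothesis (iii) is \emph{not} absorbed, and you cannot reach an inequality of the shape $\prod\le H(\mathbf{x})^{-(m+1)-\delta}$.

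The paper resolves this by working in $K^{md+1}$ with the vector
\[
\mathbf{x}=\bigl(p,\,q\sigma_1(u_1),\ldots,q\sigma_1(u_m),\ldots,q\sigma_d(u_1),\ldots,q\sigma_d(u_m)\bigr),
\]
so that there are $md$ coordinate forms and the product $\prod_{v}\prod_{j,i}|q\sigma_j(u_i)|_v$ genuinely yields $|q|^{md}$, matching the exponent in (iii). Moreover, the endgame is not an induction on $m$ but a descent on subfields: the subspace relation is massaged (via Galois conjugation, a separate $S$-unit lemma, and the Evertse $S$-unit equation theorem) into a two-term relation $a\sigma_{j_1}(u_{i_1})+b\sigma_{j_2}(u_{i_2})=0$, which forces the ratios $u_i/u_i^{(1)}$ into a \emph{proper} subfield $k'\subsetneq k$. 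Iterating this produces an infinite strictly descending chain of subfields of $K$, which is the contradiction. Your sketch of the final step (``applying every non-trivial $\sigma$ \ldots'') does not produce this descent and, as written, would not terminate.
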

	
	\begin{remark}
		When we put $m=1$ and $\alpha = \delta$ in Theorem \ref{maintheorem1}, we recover the main Theorem of Corvaja and Zannier in \cite[p.177]{corv}.
		Theorem \ref{maintheorem1} provides a rational approximation to the linear combinations of $S$-units.
	\end{remark}
	In the following result, we establish a result analogous to \cite[Theorem 1.4]{kul} in a more general setting, where we consider elements from a finitely generated multiplicative group in place of the power of algebraic numbers. First, we need the following definitions. Let $\rho, \lambda  \in \bar {\mathbb{Q}}^{\times}$, define an equivalence relation as follows: \[\rho \sim \lambda \;\;\mbox{if there is a}\;\; \sigma \in \mbox{Gal}(\bar{\mathbb{Q}}/\mathbb{Q})\;\; \mbox{such that}\;\;\frac{\rho}{\sigma(\lambda)} \in \mu.\]
	We call a tuple $(\beta_1, \ldots, \beta_n)$ satisfies the property
	\begin{itemize}
		\item  (P1) for $1\leq i \leq n$, if $\rho \neq \beta_i$ is a Galois conjugate to $\beta_i$ over $\Q$, then  $\frac{\rho}{\beta_i} \notin \mu$;
		\item (P2) for $1\leq i,j \leq n$, if $\beta_i \sim \beta_j$, then $\beta_i$ is Galois conjugate to $\beta_j$ over $\Q$.
	\end{itemize}
	
	\begin{theorem}\label{thm2}
		Let $\Gamma\subset \bar{\Q}^{\times}$ be a finitely generated multiplicative group of algebraic numbers,  let $\alpha_1,\ldots,\alpha_m$ be non-zero algebraic numbers and let $\varepsilon_1>0$ be given. Let $\mathcal{N}_{1}'$ be an infinite set of tuples $(u_1, \ldots, u_m)\in \Gamma^m$ such that $|u_i|\geq 1$ for $1\leq i \leq m$ and for any two tuples $(u_1,\ldots,u_m)$ and $(u'_1,\ldots,u'_m)$, we have $\frac{u_{i}}{u_j}\neq \frac{u'_i}{u'_j}$ with $1\leq i\neq j\leq m$. Furthermore, each tuple $(u_1, \ldots, u_m)$ satisfies both properties $(P1)$, $(P2)$ and
		\begin{equation}\label{eq.1.01}
		\left\|\sum_{i=1}^m \alpha_i u_i \right\|< \frac{1}{(\prod_{i=1}^mH(u_i))^{\varepsilon_1}}.
		\end{equation}
		Then there exists an infinite subset of $\mathcal{N}_{1}'$ such that the following holds:
\begin{enumerate}[label=\textnormal{(\roman*)}]
\item $u_i$ is an algebraic integer for $i=1, \ldots, m$.
\item For each $\sigma\in \mbox{Gal}(\bar{\Q}/\Q)$ and $i=1, \ldots, m$ such that $\frac{\sigma(u_i)}{u_j} \notin \mu $ for $j=1,\ldots,m$, we have $|\sigma(u_i)|<1$.
\item The tuple $(\alpha_1 u_1,\ldots, \alpha_m u_m)$ is pseudo-Pisot.
\item  If $\frac{\sigma(u_i)}{u_j} \in \mu$ for $(\sigma, i, j) \in \mbox{Gal}(\bar{\Q}/\Q) \times \{1,\ldots,m\}^2$, then 	$\sigma(\alpha_iu_i)=\alpha_ju_j$. Moreover, if  $\sigma(\alpha_iu_i)=\alpha_ju_j$, then $\frac{\sigma(u_i)}{u_j}=\frac{\sigma(u'_i)}{u'_j}$ for any two tuples $(u_1,\ldots,u_m)$ and $(u'_1,\ldots,u'_m)$. 
\end{enumerate}
\end{theorem}

 \begin{remark}
 In particular if we take $m=1$, $\alpha_1=1$, $u_1=\beta^n$ for some real algebraic number $\beta>1$ and $n\in \N$, properties (i) and (iii) of Theorem \ref{thm2} recover the first theorem of  Corvaja and Zannier in \cite[p.176]{corv}.
	\end{remark}
	The organization of this paper is as follows. In the next section, we present some preliminary results concerning the Weil height and the technical ingredients needed to prove our results, including the famous Schmidt’s subspace theorem and some of its applications. In Section \ref{sec3}, we present the proof of Theorem \ref{maintheorem1} following the methods of the papers \cite{corv, vek22} with suitable modifications. Finally, in Section \ref{sec4}, we prove Theorem \ref{thm2} by adapting \cite{kul}. 
	
	\section{Preliminaries}\label{sec2}
Let $K\subset \mathbb{C}$ be a number field which is Galois over $\mathbb{Q}$ with the Galois group $\mbox{Gal}(K/\mathbb{Q})$. Let $M_K$ be the set of places of $K$ and $M_K^{\infty}$ be the set of all archimedean places of $K$ and $M_K^0 = M_K\setminus M_K^{\infty}$. For each place $w\in M_K$, let $K_w$ denote the completion of the number field $K$ with respect to $w$ and $d(w)=[K_w:\mathbb{Q}_v]$, where $v$ is the restriction  of $w$ to $\mathbb{Q}$. For every $w\in M_K$ whose restriction on $\mathbb{Q}$ is $v$ and $x\in K$, we define the normalized absolute value $|\cdot |_w$ as follows:
$$|x|_w:=|\mbox{Norm}_{K_w/\mathbb{Q}_v}(x)|_v^{{1/[K:\mathbb{Q}]}}.$$
In this notation, we have the product formula
\[\displaystyle\prod_{\omega\in M_K}|x|_\omega=1,\] for all $x\in K^\times$.
Let $S$ be a finite set of places of $K$ containing $M_K^{\infty}$. The ring of $S$-integers is defined as 
\[\mathcal{O}_{S} := \{u\in K: |u|_v \leq  1  ~~ \forall v \notin S\}\]
and the group of $S$-units is defined as
\[ \mathcal{O}_{S}^{\times} := \{u\in K: |u|_v = 1  ~~ \forall v \notin S\}.\] Note that for a number field $K$, we can choose a suitable finite set of places $S$, containing archimedean places and the corresponding group of $S$-units stable under Galois conjugation. The absolute Weil height $H(x)$ is defined as
$$
H(x):=\prod_{\omega\in M_K}\mbox{max}\{1,|x|_\omega\} \mbox{ for all } x\in K.
$$
It is evident that this height is independent of the choice of the number field $K$ that contains $x$. For a vector $\textbf{x} =(x_1,\ldots,x_n)\in K^n \backslash \{0\}$  and for a place $\omega\in M_K$, the $\omega$-norm for  $\textbf{x}$, denoted by $\|\textbf{x}\|_\omega$, is defined by 
	$$
	\Vert\textbf{x}\Vert_\omega:=\mbox{max}\{|x_1|_\omega,\ldots,|x_n|_\omega\}
	$$
	and  the projective height,  $H(\textbf{x})$, is defined by 
	$$
	H(\textbf{x})=\prod_{\omega\in M_K}\Vert\textbf{x}\Vert_\omega.
	$$
Now we are ready to state a more general version of the Schmidt's subspace theorem, which was formulated by Evertse and Schlickewei (see \cite[Chapter 7]{bomb},  \cite[Theorem $1D^\prime$]{schmidt} and  \cite[Theorem II.2]{zannier}).
\begin{theorem}[Subspace Theorem] \label{schli}
Let $K$ be a number field and $n \geq 2$ be an integer. Let $S$ be a finite set of places of $K$ that contains all the archimedean places of $K$.  For each $v \in S$, let $L_{v,1}, \ldots, L_{v,n}$ be linearly independent linear  forms in variables $X_1,\ldots,X_n$ with coefficients in $K$. For any $\varepsilon>0$, the set of solutions $\mathbf{x} \in K^n\backslash \{0\}$ of the inequality 
\begin{equation*}
\prod_{v\in S}\prod_{i=1}^n \frac{|L_{v, i}(\mathbf{x})|_v}{\Vert\mathbf{x}\Vert_v} \leq \frac{1}{H(\mathbf{x})^{n+\varepsilon}}
\end{equation*}
lies in finitely many proper subspaces of $K^n$.
\end{theorem}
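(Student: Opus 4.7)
The plan is to apply Schmidt's Subspace Theorem (Theorem \ref{schli}) in the style of \cite{corv} and \cite{kul}. First I would replace $K$ by a Galois extension of $\Q$ containing $\alpha_1,\ldots,\alpha_m$ together with all generators of $\Gamma$, and take $S$ to be the finite set of places of $K$ consisting of $M_K^{\infty}$ together with every finite place at which some generator of $\Gamma$ has non-trivial valuation, so that every element of $\Gamma$ is an $S$-unit. For each ${\bf u}\in\mathcal{N}_1$, let $p_{\bf u}\in\Z$ be the nearest integer to $\sum_{i=1}^m\alpha_iu_i$ and form the vector ${\bf x}=(u_1,\ldots,u_m,p_{\bf u})\in K^{m+1}$.

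To apply Theorem \ref{schli}, at the archimedean place $v_0$ corresponding to the identity embedding of $K$ I would take the $m+1$ linear forms $X_1,\ldots,X_m$ together with $L:=\sum_{i=1}^m\alpha_iX_i-X_{m+1}$; at every other $v\in S$ I would take the coordinate forms $X_1,\ldots,X_{m+1}$. The product formula $\prod_{v\in S}|u_i|_v=1$ for each $S$-unit $u_i$, the approximation hypothesis $|\sum_i\alpha_iu_i-p_{\bf u}|<H({\bf u})^{-\epsilon_1}$, and the polynomial growth of $|p_{\bf u}|$ with $H({\bf u})$ combine to give a decay of $\prod_{v,i}|L_{v,i}({\bf x})|_v/\|{\bf x}\|_v$ that violates the Subspace Theorem bound. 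Thus infinitely many ${\bf x}$ lie in a common proper subspace of $K^{m+1}$, yielding a non-trivial $K$-linear relation among $u_1,\ldots,u_m,p_{\bf u}$. Induction on $m$, by descending into the smaller-dimensional setting provided by such a subspace, reduces to a sharply constrained situation.

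The three conclusions are then extracted as follows. For (a), if some $u_i$ had a non-trivial denominator at a non-archimedean place $w$ of $K$, I would enlarge $S$ by $w$ and insert the form $X_i$ at $w$, producing an extra factor $|u_i|_w>1$ in the subspace product; iterating over all candidate places $w$ contradicts Theorem \ref{schli} unless each $u_i$ is an algebraic integer. For (b), at the archimedean place $v_\sigma$ corresponding to a Galois embedding $\sigma$, the coordinate form $X_i$ evaluates as $|\sigma(u_i)|^{d(v_\sigma)/[K:\Q]}$; when $\sigma(u_i)/u_j\notin\mu$ for every $j$, the non-degeneracy permits Theorem \ref{schli} to be applied so as to force $|\sigma(u_i)|<1$. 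Finally, (a) and (b) imply (c): the set $\{\alpha_iu_i\}\cup P$ is Galois-invariant, every $\beta\in P$ lies strictly inside the unit disk by (b), and the sum $\sum_{i=1}^m\alpha_iu_i+\sum_{\beta\in P}\beta$ is rational by Galois-invariance and an algebraic integer by (a), hence belongs to $\Z$, which is exactly the pseudo-Pisot property for the tuple $(\alpha_1u_1,\ldots,\alpha_mu_m)$.

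The main obstacle is the degenerate case $\sigma(u_i)=\zeta u_j$ for some $\zeta\in\mu$: such coincidences permit $|\sigma(u_i)|$ to be arbitrarily large and render the Subspace Theorem estimate at $v_\sigma$ useless. One must therefore partition the pairs $(\sigma,i)$ according to whether or not $\sigma(u_i)/u_j\in\mu$ for some $j$, pass to an infinite sub-family of $\mathcal{N}_1$ on which this partition is uniform, and carry out the Subspace estimate only on the non-degenerate indices. This is also why (c) is stated for infinitely many---rather than all---elements of $\mathcal{N}_1$: one must first restrict to a sub-family on which the Galois-index degeneracies are constant before the pseudo-Pisot conclusion can be drawn.
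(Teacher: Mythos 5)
Your proposal does not prove the statement in question. The statement is the Schmidt Subspace Theorem (Theorem \ref{schli}) itself, which the paper does not prove but cites (to Bombieri--Gubler, Schmidt, and Zannier) as a known deep result; its actual proof runs through Roth's lemma, geometry of numbers, and the theory of heights, and would occupy many pages. What you have written instead is an \emph{application} of Theorem \ref{schli}: your first sentence announces that you will ``apply Schmidt's Subspace Theorem,'' and the conclusions (a), (b), (c) you then derive are precisely the three parts of Theorem \ref{thm2} in the paper. Taken as a proof of Theorem \ref{schli} the argument is circular, since it invokes that very theorem at the outset.

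As an outline of a proof of Theorem \ref{thm2}, your sketch is broadly in the right direction and close in spirit to the paper's Section 4: you correctly identify the choice of auxiliary vector $\mathbf{x}=(u_1,\ldots,u_m,p_{\bf u})$, the linear forms with the special form $\sum_i\alpha_iX_i-X_{m+1}$ at the identity archimedean place, the use of the product formula for $S$-units, and the need to handle the degenerate case $\sigma(u_i)/u_j\in\mu$ by passing to an infinite subfamily. However, the paper's actual argument first establishes Propositions \ref{prop4.1} and \ref{prop4.2} (identifying $p$ as a specific $L$-linear combination $\sum\eta_{ij}u_{ij}$ with Galois-equivariant coefficients and showing $\eta_{ij}=\alpha_{ij}$ on the original indices) and only then derives (a), (b), (c); it also relies on Lemma \ref{lem2.4} and the finiteness of nondegenerate $S$-unit equation solutions (Lemma \ref{lem2.2}) at several points, none of which appear explicitly in your outline. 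So even for Theorem \ref{thm2} your sketch is missing the intermediate structure the paper builds. But the central issue remains: you have not addressed the Subspace Theorem, which is the statement actually posed.
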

The next lemma is useful to prove Theorem \ref{thm2}.
\begin{lemma}\label{lem2.4}
Let $n \in \mathbb{N}$, let $K$ be a number field, let $S$ be a finite set of places of $K$ containing all archimedean places, and let $\lambda_1,\ldots, \lambda_n$ be non-zero elements of $K$. Fix $\mathit{v}\in S$ and $\varepsilon>0$. Let ${\bf y} := (y_1,\ldots,y_n) \in (\mathcal{O}_S^\times)^n$ and $\mathcal R$ be defined as
\begin{equation*}
\begin{split}
\mathcal R: =\left\{(y_1,\ldots,y_n): \left|\sum_{j=1}^{n} \lambda_j y_j\right|_{\mathit{v}} \leq \frac{\max\{|y_1|_{\mathit{v}},\ldots,|y_n|_{\mathit{v}}\}}{H({\bf y})^{\varepsilon}}\right\}.
\end{split}
\end{equation*}
If $\mathcal R$ is infinite, then there exists a non-trivial linear relation of the form $$c_1 y_1+\cdots+c_n y_n=0,$$ with $c_i \in K$ for $1 \leq i \leq n$ satisfied by infinitely many elements of $\mathcal R$.
\end{lemma}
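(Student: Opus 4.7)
The plan is to invoke the Subspace Theorem (Theorem~\ref{schli}) with the point $\mathbf{x}:=(b_1y_1,\ldots,b_ny_n)\in K^n$. Since for every tuple in $\mathcal R$ some index realizes $\max_j |b_jy_j|_v$, a pigeonhole argument lets me pass to an infinite subset $\mathcal R_1\subseteq \mathcal R$ on which, after relabeling, $|b_1y_1|_v = \max_j|b_jy_j|_v$. For each $w\in S$ I take $n$ linearly independent linear forms in $n$ variables as follows: at the distinguished place $v$, put $L_{v,1}:=\sum_{j=1}^n\lambda_jX_j$ and $L_{v,j}:=X_j$ for $2\le j\le n$ (linearly independent because $\lambda_1\neq 0$), and at every other $w\in S$ take the coordinate forms $L_{w,j}:=X_j$.

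A direct computation combined with $\prod_{w\in S}|y_j|_w=1$ (since each $y_j$ is an $S$-unit) yields
\[
\prod_{w\in S}\prod_{j=1}^n|L_{w,j}(\mathbf x)|_w \;=\; \frac{\bigl|\sum_j\lambda_jb_jy_j\bigr|_v}{|b_1y_1|_v}\cdot\prod_{j=1}^n\prod_{w\in S}|b_j|_w.
\]
The defining inequality of $\mathcal R_1$ bounds the first factor by $\bigl(\prod_j H(b_j)\bigr)^{-(n+1+\epsilon)}H(\mathbf y)^{-\epsilon}$, while the product formula gives $\prod_{w\in S}|b_j|_w\le H(b_j)$, so the whole expression is at most $\bigl(\prod_j H(b_j)\bigr)^{-(n+\epsilon)}H(\mathbf y)^{-\epsilon}$. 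For the denominators I use that $|y_j|_w=1$ for $w\notin S$ to get $\prod_{w\in S}\|\mathbf x\|_w = H(\mathbf x)/\prod_{w\notin S}\max_j|b_j|_w$, and then $\prod_{w\notin S}\max_j|b_j|_w\le\prod_j H(b_j)$, producing $\prod_{w\in S}\|\mathbf x\|_w^n \ge H(\mathbf x)^n/\bigl(\prod_j H(b_j)\bigr)^n$. Combined with the crude estimate $H(\mathbf x)\le\prod_j H(b_j)\cdot H(\mathbf y)^n$ and the choice $\varepsilon':=\epsilon/n$, these bounds give
\[
\prod_{w\in S}\prod_{j=1}^n\frac{|L_{w,j}(\mathbf x)|_w}{\|\mathbf x\|_w}\;\le\;\frac{1}{H(\mathbf x)^{\,n+\varepsilon'}}
\]
for every tuple in $\mathcal R_1$.

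Theorem~\ref{schli} then confines these $\mathbf x$ to finitely many proper subspaces of $K^n$, so by pigeonhole a single subspace $c_1X_1+\cdots+c_nX_n=0$ with $(c_1,\ldots,c_n)\in K^n\setminus\{0\}$ contains $\mathbf x$ for infinitely many tuples in $\mathcal R$, which is precisely the relation claimed. The only delicate point is the exponent bookkeeping: the factor $(\prod_j H(b_j))^{n+1+\epsilon}$ in the hypothesis is tuned so that one power of $\prod_j H(b_j)$ is absorbed by $\prod_{w\in S}|b_j|_w\le H(b_j)$, $n$ further powers by the lower bound on $\prod_{w\in S}\|\mathbf x\|_w^n$, and the residual $\epsilon$ (paired with $H(\mathbf y)^{-\epsilon}$) just barely survives the crude comparison $H(\mathbf x)\le\prod_j H(b_j)H(\mathbf y)^n$ to leave a genuine gain $H(\mathbf x)^{-\epsilon/n}$ beyond the critical exponent $n$ demanded by the Subspace Theorem.
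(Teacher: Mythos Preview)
Your argument is correct and is precisely the standard Subspace Theorem computation underlying this lemma; the paper does not give its own proof but simply cites \cite[Proposition~2.3]{kul}, and your write-up is essentially that argument. One harmless slip: your height comparison $H(\mathbf{x})\le\prod_j H(b_j)\cdot H(\mathbf{y})^n$ actually holds with exponent $1$ rather than $n$ on $H(\mathbf{y})$ (since $\max_j|b_jy_j|_w\le\max_j|b_j|_w\cdot\max_j|y_j|_w$), so you could take $\varepsilon'=\epsilon$ instead of $\epsilon/n$, but either choice suffices.
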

\begin{proof}
See \cite[Proposition 2.3]{kul}.
\end{proof}
We also need the following lemma, which is a special case of the $S$-unit equation theorem proved by Evertse.  
\begin{lemma}[\cite{ev1984}]\label{lem2.2}
Let $K$ be a number field and let $S$  be a finite set of places of $K$ containing all the archimedean places, with group of $S$-units $\mathcal{O}_S^{\times}$. Let $a_1,\ldots, a_n$ be non-zero elements of $K$. Let $\mathfrak{X}\subset (\mathcal{O}_S^\times)^n$ be a set of solutions of  
\begin{equation}\label{uniteq}
a_1 y_1+\cdots+a_n y_n=0
\end{equation}
such that $(y_1, \ldots, y_n)\in (\mathcal{O}_S^\times)^n$ and no proper subsum of \eqref{uniteq} vanishes. Then $\mathfrak{X}$ is a finite set.
\end{lemma}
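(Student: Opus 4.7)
The approach is induction on $n$, with the Schmidt Subspace Theorem (Theorem \ref{schli}) as the inductive engine. Since any $S$-unit scalar multiple of a solution is again a solution in $\mathfrak{X}$, the intended finiteness is that of $\mathfrak{X}$ modulo this scaling action. Equivalently, dividing through by $y_n$ and writing $z_i := y_i/y_n$, it suffices to show that only finitely many tuples $(z_1,\ldots,z_{n-1}) \in (\mathcal{O}_S^\times)^{n-1}$ satisfy
\begin{equation*}
a_1 z_1 + \cdots + a_{n-1} z_{n-1} = -a_n
\end{equation*}
under the hypothesis that no proper subsum of $a_1 z_1 + \cdots + a_{n-1} z_{n-1} + a_n$ vanishes. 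The base case $n = 2$ is immediate, yielding at most the single projective solution $z_1 = -a_2/a_1$.

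For $n \geq 3$, suppose for contradiction that an infinite set $\mathfrak{X}_0$ of projectively distinct solutions exists. For each solution and each $v \in S$, let $j(v) \in \{1,\ldots,n-1\}$ realize the maximum of $|a_i z_i|_v$; pigeonholing over the finitely many functions $j : S \to \{1,\ldots,n-1\}$ provides an infinite sub-family on which $j$ is fixed. At each $v \in S$, consider the linearly independent linear forms on $K^{n-1}$
\begin{equation*}
L_{v,j(v)}(\mathbf{X}) := \sum_{i=1}^{n-1} a_i X_i, \qquad L_{v,i}(\mathbf{X}) := X_i \quad (i \neq j(v)).
\end{equation*}
Evaluating on $\mathbf{z} = (z_1,\ldots,z_{n-1})$ gives $L_{v,j(v)}(\mathbf{z}) = -a_n$. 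Combining $\prod_{v \in S} |z_i|_v = 1$ (product formula for $S$-units), $\|\mathbf{z}\|_v = 1$ for $v \notin S$, and the dominance bound $|z_{j(v)}|_v \geq c_v \|\mathbf{z}\|_v$ supplied by the pigeonhole choice (with $c_v$ depending only on the $a_i$), a direct computation yields
\begin{equation*}
\prod_{v \in S} \prod_{i=1}^{n-1} \frac{|L_{v,i}(\mathbf{z})|_v}{\|\mathbf{z}\|_v} \;\leq\; \frac{C}{H(\mathbf{z})^n}
\end{equation*}
for a constant $C > 0$ depending only on $a_1,\ldots,a_n$ and $S$. For any fixed $\varepsilon \in (0,1)$, the right-hand side is $\leq H(\mathbf{z})^{-((n-1)+\varepsilon)}$ for all but finitely many $\mathbf{z}$ (absorbing $C$ into $H(\mathbf{z})^{1-\varepsilon}$), so Theorem \ref{schli} places all such $\mathbf{z}$ in finitely many proper subspaces of $K^{n-1}$.

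Passing to an infinite sub-family contained in one such subspace yields a nontrivial linear relation $d_1 z_1 + \cdots + d_{n-1} z_{n-1} = 0$. Since $\mathbf{z}$ satisfies $\sum_i a_i z_i = -a_n \neq 0$, the coefficient vector $(d_i)$ is not proportional to $(a_i)$, so the two relations are linearly independent; eliminating one variable between them produces a new $S$-unit equation in at most $n-2$ of the $z_i$ (equated to an explicit constant), still satisfied by infinitely many projectively distinct tuples. The principal obstacle is the final bookkeeping: one must verify that either the reduced equation inherits the no-vanishing-subsum property, in which case the inductive hypothesis supplies the contradiction, or any vanishing proper subsum of the reduced equation translates back via the elimination relation to a vanishing proper subsum of the original equation \eqref{uniteq}, directly contradicting the defining hypothesis on $\mathfrak{X}$.
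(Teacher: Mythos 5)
The paper does not prove this lemma at all; it is imported as a black box from Evertse's 1984 paper (the text says ``which is a special case of the $S$-unit equation theorem proved by Evertse and van der Poorten--Schlickewei'' and cites \cite{ev1984}). You are therefore attempting to reconstruct a proof the paper deliberately outsources, and your strategy --- dehomogenize by $y_n$, apply Schmidt's Subspace Theorem after pigeonholing on the dominant index $j(v)$, descend inside a proper subspace, and induct --- is indeed the standard route. Your observation that the statement must be read projectively (modulo the $\mathcal{O}_S^\times$-scaling action) is also correct and worth making, since as literally written the set $\mathfrak{X}$ is infinite whenever it is nonempty. The Subspace Theorem computation itself is sound: the numerator $\prod_{v\in S}|{-}a_n|_v\prod_{i\ne j(v)}|z_i|_v$ collapses via the product formula and the bound $|z_{j(v)}|_v\gg_{\mathbf a}\|\mathbf z\|_v$ to $O(H(\mathbf z)^{-1})$, the denominator is $H(\mathbf z)^{n-1}$, and absorbing the constant for large height gives the exponent $(n-1)+\varepsilon$.

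The genuine gap is exactly where you flag it, but your proposed dichotomy is not exhaustive, so the flag understates the problem. After eliminating $z_{n-1}$ using the subspace relation $\sum d_i z_i = 0$, the reduced coefficients are $b_i = a_i - (a_{n-1}/d_{n-1})\,d_i$. A vanishing proper subsum $\sum_{i\in I} b_i z_i = 0$ of the reduced equation rewrites as $\sum_{i\in I} a_i z_i = \frac{a_{n-1}}{d_{n-1}}\sum_{i\in I} d_i z_i$, and the right-hand side has no reason to vanish; this is not a vanishing proper subsum of the original equation, and it does not contradict the hypothesis on $\mathfrak{X}$. So there is a genuine third case --- the reduced equation degenerates, yet the original does not --- which your argument does not close. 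The correct resolution, as in Evertse or the treatment in Bombieri--Gubler, is to partition the reduced sum into its maximal blocks with no internal vanishing subsum (a \emph{minimal} vanishing decomposition), apply the inductive hypothesis separately to the block containing the constant $a_n$ and to each homogeneous block, and then run a further argument to pin down the relative scales between blocks and recover the eliminated coordinate $z_{n-1}$. That block-decomposition-and-recombination step is the combinatorial core of the theorem, not routine bookkeeping, and as written your proposal does not supply it.
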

\section{Proof of Theorem \ref{maintheorem1}}\label{sec3}
\subsection{Key result}
At first we will prove the following proposition, which plays an essential role in proving Theorem \ref{maintheorem1}.
\begin{proposition}\label{prop1}
Let $K$ be a number field of degree $n$ which is Galois over $\mathbb{Q}, k\subset K$  be a subfield of $K$ of degree $d$ over $\mathbb{Q}$ and $\alpha_1,\ldots,\alpha_m\in K$ be given non-zero algebraic numbers. Let $S$  be a finite set of places on $K$  such that $S$ contains all the archimedean places of $K$. Let $\varepsilon >0$ be given. Suppose that we have an infinite set $\mathcal{B}$ of points $(u_1,\ldots, u_m, q, p) \in (\mathcal{O}_S^\times \cap k)^m\times\mathbb{Z}^2$
such that 
\begin{enumerate}[label=\textnormal{(\roman*)}]
\item for any two tuples $(u_1,\ldots,u_m)$ and $(u'_1,\ldots,u'_m)$, we have $\frac{u_{i_1}}{u_{i_2}}\neq \frac{u'_{i_1}}{u'_{i_2}}$ for $1\leq i_1\neq i_2\leq m$ and it is stable under Galois conjugation over $\Q$;
\item $\max \{|\alpha_1 qu_1|, \ldots, |\alpha_m qu_m|\}>1$;
\item the tuple $(\alpha_1qu_1, \ldots, \alpha_mq u_m)$ is not pseudo-Pisot; and 
\item \begin{equation}\label{prop1eq1}
0< \left|\sum_{i=1}^m \alpha_iq u_i - p\right| < \frac{1}{(\displaystyle\prod_{i=1}^mH(u_i))^{\varepsilon}|q|^{md+\varepsilon}}.
\end{equation}  
\end{enumerate}
Then there exists a proper subfield $k'\subset k$, a non-zero tuple $(u_1^{(1)}, \ldots, u_m^{(1)})\in k^m$ and an infinite subset $\mathcal{B}'\subset \mathcal{B}$  such that for all tuples $(u_1,\ldots, u_m, q, p) \in\mathcal{B}'$, we have \[\left(\frac{u_1}{u_1^{(1)}}, \ldots, \frac{u_m}{u_m^{(1)}}\right)\in (k')^m.\]
\end{proposition}
\begin{remark}
Note that if $K=\Q$, then $\mathcal{B}$ is a finite set since the field of  rational numbers does not admit proper subfields.
\end{remark}	
\subsection{Proof of Proposition \ref{prop1}}
First, observe that along an infinite subset of $\mathcal{B}$, at least one of $H(u_i)$ cannot be fixed. Suppose that $H(u_i)$ is fixed for all $i$ with $1\leq i\leq m$, then the projective height $H({\bf u})$ is also fixed, where ${\bf u}:= (u_1, \ldots, u_m)$. Thus, by  the Northcott property, there are only finitely many such tuples $(u_1,\ldots,u_m)$. There exists an infinite subset $\mathcal{A}$ of $\mathcal{B}$ such that the tuple $(u_1,\ldots,u_m)$ is constant along the set  $\mathcal{A}$, let us say $u_i=u_{0,i}$ for $i=1,\ldots,m$, where $u_{0,i}$ are some fixed algebraic numbers in $K$. Using \eqref{prop1eq1} and the fact that $H({\bf u})\leq \prod_{i=1}^m H(u_i)$, the inequality 
$$
0<\left|\alpha_1  u_{0,1}+\cdots+\alpha_m  u_{0,m}-\frac{p}{q}\right|<\frac{1}{(\displaystyle\prod_{i=1}^mH(u_i))^{\varepsilon}q^{md+1+\varepsilon}}\leq \frac{1}{H({\bf u})^{\varepsilon}q^{md+1+\varepsilon}}
$$
has infinitely many solutions in the rationals $p/q$, which contradicts Roth's theorem (\cite[Chapter II]{schmidt}). Hence, at least one of $H(u_i)$ cannot be fixed along infinitely many tuples $(u_1,\ldots,u_m, q, p) \in \mathcal{B}$.
Given that $K$ is a number field which is Galois over $\mathbb{Q}$ and $k\subset K$  be a subfield of degree $d$  over $\mathbb{Q}$. Since $\mathbb{Q}$ does not admit any proper subfield in it, we can take $d\geq 2$.  Let $\mathcal{G}:=\mbox{Gal}(K/\mathbb{Q})$ be the Galois group of $K$ over $\mathbb{Q}$ and let $\mathcal{H}:=\mbox{Gal}(K/k)\subset \mathcal{G}$  be the subgroup of $\mathcal{G}$ fixing $k$.  
Since $K$ is Galois over $\mathbb{Q}$, we have that $K$ is Galois over $k$ and $|\mathcal{G}/\mathcal{H}| =d$.  Therefore among the $n$ embeddings of $K$, there are $d$ embeddings $\sigma_1,\ldots,\sigma_d$ which are the complete set of representatives of the left cosets of $\mathcal{H}$ in $\mathcal{G}$, with $\sigma_1$ as the identity.  More precisely, we have 
$$
\mathcal{G}/\mathcal{H}:=\{\mathcal{H}, \sigma_2 \mathcal{H},\ldots,\sigma_d \mathcal{H}\}.
$$
For each $\rho\in \mathcal{G}$, we define an archimedean absolute value on $K$ by the formula
\begin{equation}\label{eq3.23}
|x|_\rho:=|\rho^{-1}(x)|^{d(\rho)/[K:\mathbb{Q}]},
\end{equation}
where $d(\rho)=1$ if $\rho(K)=K \subset \R$, $d(\rho)=2$ otherwise, and $|\cdot |$  denotes the usual absolute value in $\mathbb{C}$. Two distinct automorphisms $\rho_1$ and $\rho_2$ define the same valuation if and only if $\rho_1^{-1}\circ \rho_2$ is the complex conjugation. For each $\rho\in \mathcal{G}$ and for any $(u_1, \ldots, u_m, q, p)\in\mathcal{B}$, we have 
\begin{align}\label{eq2.23}
\left|\sum_{i=1}^m\alpha_i q u_i-p\right|^{d(\rho)/[K:\mathbb{Q}]}&=\left|\sum_{i=1}^m\rho(\alpha_i) q \rho(u_i)-p\right|_{\rho}.
\end{align}
For each $\mathit{v}\in  M_K^\infty$, let $\rho_\mathit{v}$  be an automorphism defining the valuation $\mathit{v}$, according to \eqref{eq3.23}, $|x|_\mathit{v}:=|x|_{\rho_\mathit{v}}$. Then the set $\{\rho_\mathit{v} : \mathit{v}\in  M_K^\infty\}$  represents the left cosets of the subgroup generated by the complex conjugation in $\mathcal{G}$.   Let $S_\ell$ (for $\ell=1,\ldots, d$), be the subset of $ M_K^\infty$  formed by those valuations $\mathit{v}$ such that $\rho_\mathit{v}|_k=\sigma_\ell: k\rightarrow \mathbb{C}$.  Note that $S_1\cup\cdots\cup S_d= M_K^\infty$. Thus, we have $ M_K^\infty = \{\rho_v : v\in  M_K^\infty\}$ 
and 
\begin{equation}\label{prodfor}
\displaystyle\sum_{\mathit{v}\in  M_K^\infty}d(\rho_\mathit{v})=[K:\mathbb{Q}].
\end{equation}
For each tuple $(u_1,\ldots, u_m, q, p)\in \mathcal{B}$, we obtain
\begin{eqnarray*}
			\prod_{\mathit{v}\in  M_K^\infty}  \left|\sum_{i=1}^m\rho_{\mathit{v}}(\alpha_i) q \rho _{\mathit{v}}(u_i)-p\right|_{\mathit{v}}= \prod_{\ell=1}^d\prod_{\mathit{v}\in S_l} \left|\sum_{i=1}^m\rho_{\mathit{v}}(\alpha_i) q \sigma_\ell(u_i)-p\right|_\mathit{v}.
		\end{eqnarray*} 
		From \eqref{eq2.23}, we see that 
		\begin{align*}
			\prod_{\mathit{v}\in  M_K^\infty}\left|\sum_{i=1}^m\rho_{\mathit{v}}(\alpha_i) q \rho_{\mathit{v}}(u_i)-p\right|_\mathit{v} &=\prod_{\mathit{v}\in  M_K^\infty} \left|\sum_{i=1}^m\alpha_i q u_i-p\right|^{d(\rho_{\mathit{v}})/[K:\mathbb{Q}]} \\
			&=  \left|\sum_{i=1}^m\alpha_i q u_i-p\right|^{{\sum_{\mathit{v}\in  M_K^\infty}d(\rho_\mathit{v})/[K}:\mathbb{Q}]}.
		\end{align*}
		Using \eqref{prodfor}, it follows that 
		\begin{equation}\label{eq2.24}
			\prod_{\ell=1}^d\prod_{\mathit{v}\in S_\ell} \left|\sum_{i=1}^m\rho_\mathit{v}(\alpha_i) q \sigma_\ell(u_i)-p\right|_\mathit{v}=
			\left|\sum_{i=1}^m\alpha_i q u_i-p\right|.
		\end{equation}
Set ${\bf x} = (x_{00}, x_{11},\ldots,x_{1m}, \ldots, x_{d1}, \ldots, x_{dm})$. Now, for each $\mathit{v}\in S$, we define $md+1$ linearly independent linear forms in the $md+1$  variables as follows. For each $\ell = 1, \ldots, d$ and for an archimedean place $\mathit{v}\in S_\ell$, we define  
\begin{equation*}
			L_{\mathit{v},00}(x_{00}, x_{11},\ldots,x_{1m}, \ldots, x_{d1}, \ldots, x_{dm})= -x_{00} +\rho_\mathit{v}(\alpha_1) x_{l1}+\cdots+\rho_\mathit{v}(\alpha_m) x_{lm}
	\end{equation*}
and for all $v\in S\backslash  M_K^\infty$, 
    \[L_{\mathit{v},00}(x_{00}, x_{11},\ldots,x_{1m}, \ldots, x_{d1}, \ldots, x_{dm})= x_{00}.\]
    Now for $v \in S$ and for $1 \leq i\leq m, 1\leq j\leq d$, define 
	$$
	L_{v, ji}(x_{00}, x_{11},\ldots,x_{1m}, \ldots, x_{d1}, \ldots, x_{dm})=x_{ji}.
	$$
	Clearly, one can see that these linear forms are $\mathbb{Q}$-linearly independent for each $v$. Let the special points $\mathbf{x} \in K^{md+1}$ be of the form
	$$
	\mathbf{x}=(p, q\sigma_1(u_1),\ldots,q\sigma_1(u_m), \ldots, q\sigma_d(u_1),\ldots,q\sigma_d(u_m)) \in K^{md+1}.
	$$
	Let $L_{\mathit{v}}$ denote the collection of all the $md+1$ linear forms corresponding to $v$.
	Now we estimate the product
	\begin{equation}\label{eq2.25}
	\prod_{\mathit{v\in S}}\prod_{L \in L_{\mathit{v}}}\frac{|L(\mathbf{x})|_\mathit{v}}{\Vert\mathbf{x}\Vert_\mathit{v}}=\frac{\left(\prod_{\mathit{v\in S}}|L_{\mathit{v},00}(\mathbf{x})|_\mathit{v}\right)\prod_{\mathit{v}\in S}\left(\prod_{j=1}^{d} \prod_{i=1}^{m}|L_{\mathit{v},ji}(\mathbf{x})|_\mathit{v}\right)}{\prod_{\mathit{v\in S}}\prod_{L \in L_{\mathit{v}}}\Vert\mathbf{x}\Vert_\mathit{v}}.
	\end{equation}
		First, we find the product for $\mathit{v}\in S$ with $i\neq 0$ and $j\neq 0$. Using the fact that $L_{\mathit{v},ji}(\mathbf{x}) =q \sigma_j(u_i)$, for all $1\leq j\leq d, 1\leq i \leq m$ and for all $v$, we obtain the following.
		\begin{align*}
			\prod_{\mathit{v}\in S}\prod_{j=1}^{d} \prod_{i=1}^{m}|L_{\mathit{v},ji}(\mathbf{x})|_\mathit{v}&=
			\prod_{\mathit{v}\in S}\prod_{j=1}^{d}\prod_{i=1}^{m}|q \sigma_j(u_i)|_\mathit{v}
			=\prod_{\mathit{v}\in S}\prod_{j=1}^{d}\prod_{i=1}^{m}|q|_\mathit{v}\prod_{j=1}^{d}\prod_{i=1}^{m}\prod_{\mathit{v}\in S}|\sigma_j(u_i)|_\mathit{v}.
		\end{align*}
		Since $\sigma_j(u_i)$  are $S$-units, by the product formula we have
		$$
		\prod_{\mathit{v}\in S}|\sigma_j(u_i)|_\mathit{v}=\prod_{\mathit{v}\in M_K}|\sigma_j(u_i)|_\mathit{v}=1.  
		$$
Consequently, from the above equality and from the formula \eqref{prodfor}, we get  
		\begin{align}
			\begin{split}\label{eq2.26}
				\prod_{\mathit{v}\in S}\prod_{j=1}^{d} \prod_{i=1}^{m}|L_{\mathit{v},ji}(\mathbf{x})|_\mathit{v}&=\prod_{\mathit{v}\in S}\prod_{j=1}^{d}\prod_{i=1}^{m}|q|_\mathit{v}\leq \prod_{v\in  M_K^\infty}\prod_{j=1}^{d}\prod_{i=1}^{m}|q|_\mathit{v}= |q|^{md}.
			\end{split}
		\end{align}
Since the coordinates of $\mathbf{x}$ are $S$-integers, i.e., $\|\mathbf{x}\|_\mathit{v}\leq 1$ for all $\mathit{v}\not\in S$, we estimate the product in the denominator of \eqref{eq2.25} as
		\begin{equation}\label{eq2.27}
			\prod_{\mathit{v\in S}}\prod_{L \in L_{\mathit{v}}}\Vert\mathbf{x}\Vert_\mathit{v} \geq \left(\prod_{\mathit{v\in M_K}}\Vert\mathbf{x}\Vert_\mathit{v}\right)^{md+1}\geq H(\mathbf{x})^{md+1}.
		\end{equation}
From\eqref{eq2.24}, \eqref{eq2.26}, \eqref{eq2.27} and using the integrality of $p$, we get $$
		\prod_{\mathit{v\in S}}\prod_{j=1}^{d}\prod_{i=1}^{m}\frac{|L_{\mathit{v},00}(\mathbf{x})|_\mathit{v}|L_{\mathit{v},ji}(\mathbf{x})|_\mathit{v}}{\|\mathbf{x}\|_\mathit{v}}\leq \frac{|q|^{md}}{H(\mathbf{x})^{md+1}} \left|\sum_{i=1}^m\alpha_i q u_i-p\right|.
		$$
Therefore, by \eqref{prop1eq1}, we get 
		$$
		\prod_{\mathit{v\in S}}\prod_{j=1}^{d}\prod_{i=1}^{m}\frac{|L_{\mathit{v},00}(\mathbf{x})|_\mathit{v}|L_{\mathit{v},ji}(\mathbf{x})|_\mathit{v}}{\|\mathbf{x}\|_\mathit{v}}\leq \frac{1}{H(\mathbf{x})^{md+1}}\frac{1}{\left(\prod_{i=1}^mH( u_i)\right)^{\varepsilon}|q|^{\varepsilon}}.
		$$
Using the fact that  every conjugate of $u_i$ has an absolute value bounded  by its Weil height power $d$, we get 
		\begin{align*}
			|p| &\leq \left|\sum_{i=1}^m \alpha_i q u_i-p +\sum_{i=1}^m \alpha_i q u_i\right| \leq \left|\sum_{i=1}^m \alpha_i q u_i\right|+1 \leq 1+ m|q| \max_{1\leq i\leq m}{|\alpha_i|}H(u_i)^{d}. 
		\end{align*}
Since $\displaystyle\max_{1\leq i\leq m}H(u_i)\to\infty$, we have
$
|p|  \leq C_1(\alpha_i, m) |q| \max_{1\leq i\leq m}H( u_i)^{d},
$
where $C_1(\alpha_i, m)>1$ is a constant that depends only on $\alpha_i$ and $m$. Notice that 
\begin{align*}
H(\mathbf{x})&=\prod_{\mathit{v}\in M_K}\mbox{max}\{|p|_\mathit{v},|q\sigma_1(u_1)|_\mathit{v},\ldots,|q\sigma_1(u_m)|_\mathit{v}, \ldots, |q\sigma_d(u_1)|_\mathit{v},\ldots,|q\sigma_d(u_m)|_\mathit{v}\}\\
&\leq \prod_{\mathit{v}\in S}\mbox{max}\{|p|_\mathit{v},|q\sigma_1(u_1)|_\mathit{v},\ldots,|q\sigma_1(u_m)|_\mathit{v}, \ldots, |q\sigma_d(u_1)|_\mathit{v},\ldots,|q\sigma_d(u_m)|_\mathit{v}\}\\
&\leq |pq| \prod_{\mathit{v}\in S}\mbox{max}\{1,|\sigma_1(u_1)|_\mathit{v},\ldots,|\sigma_1(u_m)|_\mathit{v}, \ldots, |\sigma_d(u_1)|_\mathit{v},\ldots,|\sigma_d(u_m)|_\mathit{v}\}\\ 
&\leq|pq|\left(\prod_{i=1}^m H(u_i)\right)^d.
\end{align*} 
Since $|p|  \leq C_1(\alpha_i, m) |q| \max_{1\leq i\leq m}H( u_i)^{d}$, we get    
		$$
		H(\mathbf{x}) \leq C_2|q|^2\left(\prod_{i=1}^mH(u_i)\right)^{2d} \leq C_2\left(|q|\prod_{i=1}^mH(u_i)\right)^{2d}
		$$ 
        and this implies that $ |q|\prod_{i=1}^mH(u_i) \geq C_3 H(\mathbf{x})^{1/2d}$,
		where $C_2$ and $C_3$ are positive constants depending only on $\alpha_i$, $m$ and $d$. Thus
		$$
		\prod_{\mathit{v\in S}}\prod_{j=1}^{d}\prod_{i=1}^{m}\frac{|L_{\mathit{v},00}(\mathbf{x})|_\mathit{v}|L_{\mathit{v},ji}(\mathbf{x})|_\mathit{v}}{\|\mathbf{x}\|_\mathit{v}}\leq \frac{1}{H(\mathbf{x})^{md+1+\varepsilon/2d}}.
		$$
By Theorem \ref{schli}, there exists a proper subspace of $K^{md+1}$ which contains infinitely many points $\mathbf{x}=(p, q\sigma_1(u_1),\ldots,q\sigma_1(u_m),\ldots, q\sigma_d(u_1),\ldots,q\sigma_d(u_m))$. So, we obtain a non-trivial  relation of the form
\begin{equation}\label{eq2.28}
			a_0 p+a_{11}q\sigma_1(u_1)+\cdots+a_{1m}q \sigma_1(u_m)+\cdots+ a_{d1}q\sigma_d(u_1)+\cdots+a_{dm} q \sigma_d(u_m)=0,
\end{equation} where $a_{ji}\in K$,
satisfied by all tuples $(u_1,\ldots, u_m, q, p)\in\mathcal{B}_1 \subset \mathcal{B}$ for some infinite subset $\mathcal{B}_1$  of $\mathcal{B}$. Also,  for each tuple $(u_1,\ldots, u_m, q, p) \in \mathcal{B}_1$, without loss of generality, we can assume that $|q\alpha_1 u_1| > 1$. 

Since not all $a_{ji}$'s are $0$, clearly, we can conclude that at least one among $a_{11},\ldots, a_{1m}, \ldots, a_{d1}, \ldots, a_{dm}$ is non-zero. Now, we have the following claim.
		
\bigskip

\noindent{\sc Claim: }  There exists a non-trivial relation as in \eqref{eq2.28} with $a_0 = 0$.
		
\bigskip
		
If $a_0\neq 0$, we rewrite the relation \eqref{eq2.28} as 
\begin{equation}\label{eq2.29} 
			p=-\frac{a_{11}}{a_0}q\sigma_1(u_1)-\cdots -\frac{a_{1m}}{a_0}q\sigma_1(u_m)-\cdots-\frac{a_{d1}}{a_0}q\sigma_d(u_1)-\cdots -\frac{a_{dm}}{a_0}q\sigma_d(u_m).
		\end{equation} 
Suppose that for some index $j\in \{2, \ldots, d\}$ and $i\in \{1, \ldots, m\}, \; \sigma_j(a_{1i}/a_0)\neq a_{ji}/a_0$, then we apply the automorphism $\sigma_j$ to both sides of \eqref{eq2.29} and subtract term-by-term from \eqref{eq2.29} to eliminate $p$. Since the coefficient of $\sigma_j(u_i)$ is $\sigma_j(a_{1i}/a_0)-a_{ji}/a_0\neq 0$, we obtain non-trivial relation involving only the terms \[\sigma_1(u_1), \ldots, \sigma_1(u_m), \ldots, \sigma_d(u_1), \ldots, \sigma_d(u_m).\]
Hence, we have proved the claim  in this case.
		
Now assume that $\sigma_j(a_{1i}/a_0)= a_{ji}/a_0$ for all $1\leq i\leq m$ and $1\leq j\leq d$. So in particular all coefficients $a_{ji}/a_0$ are non-zero. Setting $\lambda_i = -a_{1i}/a_0$, we rewrite \eqref{eq2.29} as follows:
		\begin{equation}{\label{eq2.29a}}
			p=q(\sigma_1(\lambda_1)\sigma_1(u_1)+\cdots+\sigma_1(\lambda_m)\sigma_1(u_m)+\cdots+\sigma_d(\lambda_1)\sigma_d(u_1)+\cdots+ \sigma_d(\lambda_m)\sigma_d(u_m)).
		\end{equation}
		At first, suppose that $\lambda_i$ does not belong to $k$. Then there exists an automorphism $\tau\in \mathcal{H}$ with $\tau(\lambda_i)\neq \lambda_i$. By applying $\tau$ on both sides of \eqref{eq2.29a} and subtracting term-by-term from \eqref{eq2.29a}, we obtain
		\begin{equation}{\label{eq2.29b}}
			\sum_{i=1}^m (\lambda_i-\tau(\lambda_i)) \sigma_1(u_i)+\sum_{j=2}^d\sum_{i=1}^m(\sigma_j(\lambda_i)\sigma_j(u_i)-\tau \circ\sigma_j(\lambda_i)\tau \circ\sigma_j(u_i))=0.
		\end{equation}
		Note that on $k$, $\tau \circ\sigma_j$ coincides with some $\sigma_i$ for some integer $i$. Since $\tau\in \mathcal{H}$  and $\sigma_2,\ldots,\sigma_d \not\in \mathcal{H}$, none of the $\tau \circ \sigma_j$ with $j\geq 2$ belongs to $\mathcal{H}$.  Hence, the relation in \eqref{eq2.29b} can be written as a linear combination of $\sigma_j(u_i)$ with the property that the coefficient of $\sigma_1(u_i)$  is $\lambda_i-\tau(\lambda_i)\ne 0$ for $1\leq i \leq m$.   Therefore, we obtain a non-trivial relation among the $\sigma_j(u_i)$, as claimed.

		Next, we may suppose that $\lambda_i\in k$ for $1\leq i\leq m$. Recall that $\sigma_1$ is the identity embedding. By adding $-\sum_{i=1}^m \alpha_iq u_i$ to both sides of \eqref{eq2.29}, we get 
		\begin{align*}
			\left|p-\sum_{i=1}^m \alpha_iq u_i\right|&=\left|\sum_{i=1}^m(\lambda_i-\alpha_i)q \sigma_1(u_i)+\sum_{j=2}^d\sum_{i=1}^mq\sigma_j(\lambda_i u_i)\right|.
		\end{align*}
Since $H({\bf u})\leq \prod_{i=1}^m H(u_i)$ and then from \eqref{prop1eq1}, we infer
		$$
		\left|\sum_{i=1}^m(\lambda_i-\alpha_i)q \sigma_1(u_i)+\sum_{j=2}^d\sum_{i=1}^mq\sigma_j(\lambda_i u_i)\right|<\frac{1}{H({\bf u})^{\varepsilon} |q|^{md+\varepsilon}}.
		$$
Dividing by $q$ on both sides,
\begin{align}\label{eq2.300}
 \begin{split}
			0<\left|\sum_{i=1}^m(\lambda_i-\alpha_i)\sigma_1(u_i)+\sum_{j=2}^d\sum_{i=1}^m\sigma_j(\lambda_i u_i)\right|&<\frac{1}{H({\bf u})^{\varepsilon} |q|^{md+1+\varepsilon}} \\&< \frac{1}{H({\bf u})^{\varepsilon}|q|^{1+\varepsilon}}.
\end{split}
\end{align}  
Put  $\sigma_1(\lambda_i)-\alpha_i =\beta_{1i}$ for $1\leq i\leq m$. Then we  re-write  \eqref{eq2.300} as   
\begin{equation}\label{eq2.30}
0<\left|\sum_{i=1}^m\beta_{1i}\sigma_1(u_i)+\sum_{j=2}^d\sum_{i=1}^m\sigma_j(\lambda_i u_i)\right|< \frac{1}{H({\bf u})^{\varepsilon}|q|^{1+\varepsilon}} 
		\end{equation} and it hold for all tuples $(u_1, \ldots, u_m, q, p) \in\mathcal{B}_1$.  In order to apply Lemma \ref{lem2.4},  we distinguish two cases, namely $\beta_{1i} =0$ for all $1\leq i\leq m$ and $\beta_{1i}\neq 0$ for some $1\leq i\leq m$. 
		
{\it First Case:} Suppose that $\beta_{1i}=0$, i.e., $\sigma_1(\lambda_i)=\alpha_i$ for $1\leq i\leq m$ and this implies \[(q\alpha_1u_1, \ldots, q\alpha_mu_m) = (q\lambda_1u_1, \ldots, q\lambda_mu_m).\]

Since by assumption the tuple $(q\alpha_1u_1, \ldots, q\alpha_m u_m)$ is not pseudo-Pisot, so we find $(q\lambda_1u_1, \ldots, q\lambda_mu_m)$ is not a pseudo-Pisot tuple. Then using the fact that $(q\lambda_1u_1, \ldots, q\lambda_mu_m)$ is not a pseudo-Pisot tuple, we deduce that either
\begin{equation}\label{eq.03.15}
\sum_{i=1}^m q\lambda_i u_i+\sum_{\beta\in P} \beta\in \mathbb{Z},
\end{equation}
or $|\beta|<1$ for all $\beta \in P$
will not hold, where
\[P:=\{\beta\in \bar{\Q}^{\times}\backslash\{q\lambda_1 u_1,\ldots,q\lambda_mu_m\}\mid  \beta = \sigma(q\lambda_i u_i) \;\mbox{for some}\;\; \sigma\in \mbox{Gal}(\bar{\Q}/\Q)\}.\]
If necessary, by grouping the terms that satisfy 
	$q\lambda_{i_1} u_{i_1}= \sigma(q\lambda_{i_2} u_{i_2})$ where $1\leq {i_1}\neq {i_2}\leq m$ and $\sigma\in \mbox{Gal}(\bar{\Q}/\Q)$,  we can reduce \eqref{eq2.29a} such that \eqref{eq.03.15} is valid. Hence, we have
		\begin{equation}\label{eqkappa}
			\max\{|\sigma(q \lambda_i u_i)|: \; \sigma \in \mbox{Gal}({\bar{\Q}/\Q)}, \sigma(q\lambda_i u_i)\neq q \lambda_i u_i ~~\mbox{for}~~1\leq i\leq m\}  \geq 1.
		\end{equation}
		Then for all $1\leq i\leq m$,
		\begin{align}\label{eq2.32}
        \begin{split}
			&\max\{|\sigma_2(u_i)|, \ldots, |\sigma_d(u_i)|\} \max\{|\sigma_2(\lambda_i)|, \ldots, |\sigma_d(\lambda_i)|\} \\&\geq \max\{|\sigma_2(\lambda_i u_i)|,\ldots, |\sigma_d(\lambda_iu_i)|\} \geq \frac{1}{|q|}.
            \end{split}
		\end{align}
Thus from  \eqref{eq2.30} and \eqref{eq2.32}, for all tuples $(u_1, \ldots, u_m, q, p)\in\mathcal{B}_1$ we get 
		\begin{align}\label{eq2.33}
		\begin{split}
\left|\sum_{j=2}^d\sum_{i=1}^m\sigma_j(\lambda_i u_i)\right|&<\max_{\substack{2\leq j\leq d \\ 1\leq i\leq m}}\{|\sigma_j(u_i)|\} \frac{\max_{\substack{2\leq j\leq d \\ 1\leq i\leq m}}\{|\sigma_j(\lambda_i)|\}}{q^{\varepsilon}H({\bf u})^{\varepsilon}}\\&<\max_{\substack{2\leq j\leq d \\ 1\leq i\leq m}}\{|\sigma_j(u_i)|\} \frac{1}{q^{\varepsilon}H({\bf u})^{\varepsilon'}}.
\end{split}	
\end{align}
Since $H({\bf u})\to\infty$, we can find an appropriate $\varepsilon'>0$ such that \eqref{eq2.33} holds.
		Therefore, by Lemma \ref{lem2.4}, with the distinguished place $\omega$ corresponding to the identity embedding, we get an infinite subset $\mathcal{B}_2 \subset \mathcal{B}_1$ such that for all tuples $(u_1, \ldots, u_m, q, p) \in \mathcal{B}_2$ 
		there exists a non-trivial relation involving $\sigma_j(u_i)$.
		
		{\it Second Case:} Suppose $\beta_{1i}\neq 0$ for some $1\leq i\leq m$, (say) $\beta_{1i_0}\neq 0$. In this case, the term $\beta_{1i_0} \sigma_1(u_{i_0})$ appears in \eqref{eq2.30}. Since   $\max_{1\leq i\leq m}|\alpha_i q\sigma_1(u_i)|>1$, we see that  
		$$
		\max_{\substack{1\leq j\leq d \\ 1\leq i\leq m}}\{|\sigma_j(u_i)|\} \geq \max\{|u_1|,\ldots,|u_m|\}>\max_{1\leq i\leq m}|\alpha_i|^{-1} |q|^{-1}
		$$
		holds for all pairs $(u_1, \ldots, u_m, q)$, where the tuples $(u_1, \ldots, u_m, q, p)$  satisfying \eqref{eq2.30}. Thus from \eqref{eq2.30}, we deduce that 
		\begin{equation}\label{eq2.33b}
			\left|\beta_{1i_0} \sigma_1(u_{i_0})+\sum_{j=2}^d\sum_{i=1}^m\sigma_j(\lambda_i u_i)\right|< \frac{\max_{\substack{1\leq j\leq d \\ 1\leq i\leq m}}\{|\sigma_j(u_i)|\}}{q^{\varepsilon}H({\bf u})^{\varepsilon''}},
		\end{equation}
		for some appropriate $\varepsilon''>0$.
		Applying Lemma \ref{lem2.4} with the distinguished place $\omega$ as in case $\beta_{1i}=0$, we conclude the same as in the first case. This  completes the proof of the claim, that is we have a relation of the form 
		\begin{equation}\label{eq2.33c}
			\sum_{j=1}^d \sum_{i=1}^m s_{ji}q\sigma_{j}(u_i) =0
		\end{equation}
		for all pairs  $(u_1,\ldots, u_m, q, p)\in\mathcal{B}_2\subset \mathcal{B}_1$ and for some $s_{ji}\in K$. 
		
		Observe that \eqref{eq2.33c} is an $S$-unit equation.  By Lemma \ref{lem2.2}, there exists an infinite subset $\mathcal{B}_3$ of  $\mathcal{B}_2$  and a non-trivial relation of the form  $a\sigma_{j_1}(u_{i_1})+b\sigma_{j_2}(u_{i_2})=0$  for some distinct integers $i_1,i_2$ and $j_1, j_2$    and $a, b\in K^\times$  satisfied by all the tuples $(u_1,\ldots, u_m, q, p)\in\mathcal{B}_3$.  
		
		\bigskip
		
		{\it Sub case I:} Suppose that $i_1=i_2 =i$ and $j_1\neq j_2$. Hence, 
		$$
		-\sigma^{-1}_{j_2}\left(\frac{a}{b}\right)(\sigma^{-1}_{j_2}\circ \sigma_{j_1})(u_{i})=u_{i}$$
		is true for all tuples $(u_1, \ldots, u_m, q, p) \in \mathcal{B}_3$.   Therefore, for any two tuples \[(u_1^{(1)}, \ldots, u_m^{(1)}, q^{(1)}, p^{(1)})\quad \mbox{and} \quad (u_1^{(2)}, \ldots, u_m^{(2)}, q^{(2)}, p^{(2)}) \in \mathcal{B}_3,\]  we have  
		$$
		\sigma^{-1}_{j_2} \circ\sigma_{j_1}(u_{i}^{(1)}/u_{i}^{(2)})=u_{i}^{(1)}/u_{i}^{(2)}.
		$$
		That is,  the element $u_{i}^{(1)}/u_{i}^{(2)}$  is fixed by the automorphism $\sigma^{-1}_{j_2} \circ\sigma_{j_1}\notin \mathcal{H}$, and hence  $u_{i}^{(1)}/u_{i}^{(2)}$ belongs to the proper subfield 
		$k'$ of $k$ which is fixed by the subgroup generated by $\mathcal{H}$ and $\sigma^{-1}_{j_2} \circ\sigma_{j_1}$.
		So, fix a nonzero $(u_1^{(1)}, \ldots, u_m^{(1)}) \in k^m$ with $(u_1^{(1)}, \ldots, u_m^{(1)}, q^{(1)},p^{(1)}) \in \mathcal{B}_3$ and take any other tuple  $(u_1,\ldots, u_m, q, p) \in \mathcal{B}_3$, then we can get  
		\[\left(\frac{u_1}{u_1^{(1)}}, \ldots, \frac{u_m}{u_m^{(1)}}\right)\in (k')^m.\]  
		
		\bigskip

		{\it Sub case II:} Suppose that $i_1\neq i_2$ and $j_1= j_2= j$. Thus, 
		$$
		-\sigma^{-1}_{j}\left(\frac{a}{b}\right)=\frac{u_{i_2}}{u_{i_1}}$$
		is true for all tuples $(u_1, \ldots, u_m, q, p) \in \mathcal{B}_3$. So, \[(u_1^{(1)}, \ldots, u_m^{(1)}, q^{(1)}, p^{(1)})\quad \mbox{and} \quad (u_1^{(2)}, \ldots, u_m^{(2)}, q^{(2)}, p^{(2)}) \in \mathcal{B}_3,\] and we have  
		\begin{equation*}
			\left(\frac{u_{i_2}^{(1)}}{u_{i_1}^{(1)}}\right)\left(\frac{u_{i_2}^{(2)}}{u_{i_1}^{(2)}}\right)^{-1}=1,
		\end{equation*}
		which is not possible by assumption (i).
		
		\bigskip

		{\it Sub case III:} Suppose that $i_1\neq i_2$ and $j_1\neq j_2$. Hence, 
		$$
		-\sigma^{-1}_{j_2}\left(\frac{a}{b}\right)(\sigma^{-1}_{j_2}\circ \sigma_{j_1})(u_{i_1})=u_{i_2}$$
		is true for all tuples $(u_1, \ldots, u_m, q, p) \in \mathcal{B}_3$. Again \[(u_1^{(1)}, \ldots, u_m^{(1)}, q^{(1)}, p^{(1)})\quad \mbox{and} \quad (u_1^{(2)}, \ldots, u_m^{(2)}, q^{(2)}, p^{(2)}) \in \mathcal{B}_3,\] and hence  
		$$
		\sigma^{-1}_{j_2} \circ\sigma_{j_1}(u_{i_1}^{(1)}/u_{i_1}^{(2)})=u_{i_2}^{(1)}/u_{i_2}^{(2)}.
		$$ This implies 
		\[\left(\sigma_{j_1}(u_{i_1}^{(1)}/u_{i_1}^{(2)})\right)\left(\sigma_{j_2}(u_{i_2}^{(1)}/u_{i_2}^{(2)})\right)^{-1}=1,\]
		which is also not true by assumption (i). This completes the proof of the Proposition \ref{prop1}. \qed

\subsection{Proof of Theorem \ref{maintheorem1}} 
	Since $\Gamma$ is a finitely generated multiplicative subgroup of $\bar{\Q}^{\times}$, by enlarging $\Gamma$ if necessary, we can reduce to the situation where $\Gamma\subset \bar{\Q}^{\times}$ is the group of $S$-units for a suitable Galois extension $K$ over $\Q$ containing $\alpha_1, \ldots,\alpha_m$ and for a suitable finite set $S$ of places of $K$ containing all the archimedean places. Also, $\Gamma$ is stable under Galois conjugation.
	
	Suppose by contradiction that there are infinitely many tuples $(u_1, \ldots, u_m, q, p)\in (\mathcal{O}^{\times }_S)^m\times \mathbb{Z}^2$ satisfying conditions (i) to (iv) in Theorem \ref{maintheorem1} together with the following inequality
	\[0< \left|\sum_{i=1}^m \alpha_iq u_i - p\right|<\frac{1}{\left(\prod_{i=1}^mH(u_i)\right)^\varepsilon |q|^{md+\varepsilon}}\leq\frac{1}{H({\bf u})^{\varepsilon} |q|^{md+\varepsilon}}.\]
    The last inequality on the right hand side follows from the fact that $H({\bf u})\leq \prod_{i=1}^m H(u_i)$, where ${\bf u}:= (u_1, \ldots, u_m)$. Then by inductively,  we construct sequences $\{\alpha_{\ell,1}\}_{\ell=0}^\infty$,$\ldots$, $\{\alpha_{\ell,m}\}_{\ell=0}^\infty$ whose elements are in $K$, an infinite decreasing chain $\mathcal{B}_i$ of an infinite subset of $\mathcal{B}$  and an infinite strictly decreasing chain $k_i$ of subfields of $K$ satisfying the following properties.
	\bigskip
	
	{\it For each integer $n\geq 0$, $\mathcal{B}_n\subset (k_n^m\times \mathbb{Z}^2)\cap\mathcal{B}_{n-1}$, $k_n\subset k_{n-1}$, $k_n\neq k_{n-1}$  and all but finitely many tuples $(u_1, \ldots, u_m, q, p)\in\mathcal{B}_n$ satisfying 
		\begin{itemize}
			\item for any two tuples $(u_1,\ldots,u_m)$ and $(u'_1,\ldots,u'_m)$, we have $\frac{u_{i_1}}{u_{i_2}}\neq \frac{u'_{i_1}}{u'_{i_2}}$ with $1\leq i_1\neq i_2\leq m$ and it is stable under Galois conjugation over $\Q$;
			\item $\displaystyle \max_{1\leq i\leq m}\{|\alpha_{0i}\cdots\alpha_{ni} qu_i|\}>1$,
			\item the tuple $(\alpha_{0,1}\cdots\alpha_{n,1} qu_1, \ldots, \alpha_{0, m}\cdots\alpha_{n, m} qu_m)$ is not a  pseudo-Pisot tuple and
			\begin{equation}\label{eq4.2}
				|\alpha_{0,1}\cdots\alpha_{n,1}qu_1+\cdots+\alpha_{0,m}\cdots\alpha_{n,m}qu_m-p|<\frac{1}{(\prod_{i=1}^mH( u_i))^{\varepsilon/(n+1)} |q|^{md+\varepsilon}}.
			\end{equation}
	\end{itemize} }
	If such sequences exist, then we eventually get a contradiction to the fact that the number field $K$ does not admit any infinite strictly decreasing chain of subfields. Therefore, in order to finish the proof of Theorem \ref{maintheorem1}, it suffices to construct such sequences.
	\smallskip
	
	We proceed with our construction by applying induction on $n$: for $n=0$, put $\alpha_{0,i}=\alpha_i$ for each integer $i= 1,\ldots, m$, $k_0 = K$ and $\mathcal{B}_0=\mathcal{B}$, and we are done in this case because of our assumption. 
	By the induction hypothesis, we assume that  $\alpha_{n, i}$, $k_n$  and $\mathcal{B}_n$ for an integer $n\geq 0$ exist and satisfy \eqref{eq4.2}. Now applying Proposition \ref{prop1} with $k=k_n$  and 
	$$
	\delta_n^{(1)}=\alpha_{0,1}\cdots\alpha_{n,1},  \ldots, \delta_n^{(m)} = \alpha_{0,m}\cdots\alpha_{n,m},
	$$
	we obtain an element $\alpha_{n+1, i}\in k_{n} $ for $1\leq i\leq m$, a proper subfield $k_{n+1}$ of $k_n$ and an infinite set $\mathcal{B}_{n+1}\subset\mathcal{B}_n$  such that all tuples $(u_1,\ldots, u_m, q, p)\in \mathcal{B}_{n+1}$  satisfy $(u_1, \ldots, u_m)=(\alpha_{n+1, 1}v_1, \ldots, \alpha_{n+1, m}v_m)$  with $(v_1, \ldots, v_m)\in k_{n+1}^m$. Note that since $(u_1, \ldots, u_m)\in(\mathcal{O}^\times_S)^m$, we observe that $(v_1, \ldots, v_m)\in(\mathcal{O}^\times_S)^m$.  Hence, as $(u_1, \ldots, u_m)$ varies, we see that $(v_1, \ldots, v_m)$ varies over $(\mathcal{O}^\times_S)^m$.  Thus,  we can assume that $(u_1, \ldots, u_m, q, p)\in\mathcal{B}_{n+1}$ if and only if $(v_1, \ldots, v_m, q, p)\in\mathcal{B}_{n+1}$. 
	
By the induction hypothesis,  for every tuple $(u_1, \ldots, u_m, q, p)\in \mathcal{B}_{n+1}$ we have $\max_{1\leq i\leq m}\{|\delta_{n}^{(i)} qu_i|\}>1$ and the tuple $(\delta_{n}^{(1)} qu_1, \ldots, \delta_{n}^{(m)}q u_m)$ is not pseudo-Pisot. Since $\delta_n^{(i)}qu_i =  \delta_n^{(i)}q\alpha_{n+1, i}v_i =  \delta_{n+1}^{(i)}qv_i $, for every tuple $(u_1, \ldots, u_m, q, p)\in \mathcal{B}_{n+1}$, we deduce that $\max_{1\leq i\leq m}\{|\delta_{n+1}^{(i)} qv_i|\} >1$ and  the tuple $(\delta_{n+1}^{(1)} qv_1, \ldots, \delta_{n+1}^{(m)}q v_m)$ is not pseudo-Pisot and 
	\begin{align}\label{eqr50}
		|\delta^{(1)}_{n+1} qv_1+\cdots+\delta^{(m)}_{n+1} qv_m-p|&<\frac{1}{(\prod_{i=1}^mH(\alpha_{n+i,1} v_i)) ^{\varepsilon/(n+1)}|q|^{md+\varepsilon}}.
	\end{align}
	We see that 
	\begin{align*}
		H(v_i) \leq H(\alpha_{n+i,1}^{-1})H(\alpha_{n+i,1} v_i),
	\end{align*} which implies 
	\begin{equation}\label{eqr29}
		\prod_{i=1}^mH(\alpha_{n+i,1} v_i)\geq  \prod_{i=1}^mH(\alpha_{n+i,1}^{-1})^{-1} H(v_i).
	\end{equation}
	If we can show that 
	\begin{equation}\label{eqr30}
\prod_{i=1}^mH(\alpha_{n+i,1}^{-1})^{-1}>\left(\prod_{i=1}^mH(v_i)\right)^{-1/n+2},\end{equation}
for almost all $v_1, \ldots, v_m\in K$, then we have 
	\begin{equation}\label{eqr51}
		\prod_{i=1}^m H(\alpha_{n+i,1} v_i)\geq \left(\prod_{i=1}^mH(v_i)\right)^{(n+1)/(n+2)},
	\end{equation}for almost all $v_1, \ldots, v_m\in K$. Suppose that \[\prod_{i=1}^mH(\alpha_{n+i,1}^{-1})^{-1}<\left(\prod_{i=1}^mH(v_i)\right)^{-1/n+2}\]
and hence
\begin{equation}\label{eqr31}
\left(\prod_{i=1}^mH(\alpha_{n+i,1}^{-1})\right)^{n+2}>\prod_{i=1}^mH(v_i),\end{equation}
	that is, $H(v_1, \ldots, v_{m})$ is bounded. So using the Northcott property we can conclude that \eqref{eqr31} is true for only finitely many tuples $(v_1, \ldots, v_{m})$. Then  \eqref{eqr30} holds for all but finitely many such tuples $(v_1, \ldots, v_m, q, p)\in\mathcal{B}_{n+1}$. Therefore, from \eqref{eqr50} and \eqref{eqr51} for all but  finitely many such tuples $(v_1, \ldots, v_m, q, p)\in\mathcal{B}_{n+1}$, we have the following inequality 
	$$
	|\delta^{(1)}_{n+1} q v_1+\cdots+\delta^{(m)}_{n+1}qv_m-p|<\frac{1}{(\prod_{i=1}^mH(v_i))^{\varepsilon/(n+2)} |q|^{md+\varepsilon}}.
	$$
	holds true. This proves the induction step and completes the proof of Theorem \ref{maintheorem1}.  \qed

	\section{Proof of Theorem \ref{thm2}}\label{sec4}
    Since $\Gamma$ is a finitely generated multiplicative subgroup of $\bar{\Q}^{\times}$. Note that if necessary, we can enlarge $\Gamma$ and reduce to the situation where $\Gamma\subset \bar{\Q}^{\times}$ is the group of $S$-units for a suitable number field $K$ of degree $d$ containing $\alpha_1, \ldots,\alpha_m$ and for a suitable finite set $S$ of places of $K$ containing all the archimedean places. Also, $\Gamma$ is stable under Galois conjugation.
    \par
	Recall that a tuple $(\beta_1, \ldots, \beta_n)$ satisfies the property
	\begin{itemize}
		\item  (P1) if $\rho \neq \beta_i$ is a Galois conjugate to $\beta_i$ over $\Q$, then  $\frac{\rho}{\beta_i} \notin \mu$;
		\item (P2) if $\beta_i \sim \beta_j$, then $\beta_i$ is Galois conjugate to $\beta_j$ over $\Q$.
	\end{itemize}
By assumption, each tuple ${\bf u}:=(u_1, \ldots, u_m) \in \mathcal{N}_1'$ satisfies both properties $(P1)$ and $(P2)$.
We can partition $\{u_1, \ldots, u_m\}$ into $\mathfrak{h_{\bf u}}$ equivalence classes having cardinalities $\mathfrak{e}_i$, that is, $\{u_{i1}, \ldots, u_{i\mathfrak{e}_i}\}$, $1 \leq i \leq \mathfrak{h_{\bf u}}$, sets in the resulting partition. In addition, we relabel $\alpha_1, \ldots, \alpha_m$ as $\alpha_{i1}, \ldots,  \alpha_{i\mathfrak{e}_i}$, $1 \leq i \leq \mathfrak{h_{\bf u}}$. Note that $\mathfrak{h_{\bf u}} \leq m$. Hence we can find an $\mathfrak{h} \in \{\mathfrak{h_{\bf u}}: {\bf u} \in \mathcal{N}_1'\}$ such that there exists an infinite set of tuples $(u_1, \ldots, u_m)$ that can be partitioned exactly into $\mathfrak{h}$ equivalence classes. Let $\mathcal{N}_{1}$ be the infinite set of tuples ${\bf u}:=(u_1, \ldots, u_m)\in \Gamma^m$, which can be partitioned exactly into $\mathfrak{h}$ equivalence classes. Now by property (P2) we have that $\{u_{i1}, \ldots, u_{i\mathfrak{e}_i}\}$, $1 \leq i \leq \mathfrak{h}$ are Galois conjugates to each other. We let $\mathfrak{d}_i \geq \mathfrak{e}_i $ denote the number of possible Galois conjugates of $u_{i1}$. We now denote $u_{i \mathfrak{e}_i+1}, \ldots u_{i \mathfrak{d}_i}$ all the other Galois conjugates of $u_{i1}$, that do not appear in $\{u_{i1}, \ldots, u_{i\mathfrak{e}_i}\}$, $1 \leq i \leq \mathfrak{h}$. Also, assume that $\sigma_i$ is the permutation on $\{1,\ldots, \mathfrak{d}_i\}$ corresponding to the action of $\sigma $ on $\{u_{i1}, \ldots, u_{i\mathfrak{d}_i}\}$, i.e., $\sigma(u_{ij})=u_{i\sigma_i(j)}$. 
By extending $K$, we may choose a finite subset $S$ of $M_K$ containing $M_K^{\infty}$ in such a way that, for each $\mathit{v} \in S$, $\sigma_{\mathit{v}}(\alpha_{ij})$ is $S$-unit for each $i$ and $j$. Let $L/ \mathbb{Q}$ be the Galois closure of $K/ \mathbb{Q}$ and let $S'$ be the set of places of $L$ lying above $S$.\\
Since $\Gamma$ is a finitely generated multiplicative group of algebraic numbers, then for any $u_{ij} \in \Gamma$, we write 
\begin{equation}\label{eqnew4.1}
u_{ij}=(\gamma_1)^{n_{ij}^{(1)}} \cdots (\gamma_s)^{n_{ij}^{(s)}},
\end{equation}
where $n_{ij}^{(t)}$ are integers for $t=1,\ldots, s$ and $\gamma_1, \ldots, \gamma_s$ are generators of $\Gamma$. Denote by 
\begin{equation}\label{eq4.1}
\mathfrak{n}=\max_{\substack{1\leq i\leq \mathfrak{h}\\ 1\leq j\leq \mathfrak{e}_i }} \{n_{ij}^{(1)}, \ldots, n_{ij}^{(s)}\}.  
\end{equation}
For the proof of Proposition \ref{prop4.1} and \ref{prop4.2}, we proceed along similar lines as in \cite[Proposition 3.4]{kul}.
\begin{proposition}\label{prop4.1}
Let $\Gamma\subset \bar{\Q}^{\times}$ be a finitely generated multiplicative group of algebraic numbers, let $\alpha_1,\ldots,\alpha_m$ be given non-zero algebraic numbers, and let $\varepsilon_1>0$ be given. Let $\mathcal{N}_{1}$ be an infinite set of tuples ${\bf u}:=(u_1, \ldots, u_m)\in \Gamma^m$, which can be partitioned exactly into $\mathfrak{h}$ equivalence classes, $|u_i|\geq 1$ for $1\leq i \leq m$, for any two tuples $(u_1,\ldots,u_m)$ and $(u'_1,\ldots,u'_m)$, we have $\frac{u_{i}}{u_j}\neq \frac{u'_i}{u'_j}$ for $1\leq i\neq j\leq m$ and satisfies both properties $(P1)$, $(P2)$ and \[\left\|\sum_{i=1}^m \alpha_i u_i \right\|< \frac{1}{(\prod_{i=1}^mH(u_i))^{\varepsilon_1}}.\]
If $p$ denote the nearest integer to $\sum_{i=1}^m \alpha_i u_i =\sum_{i=1}^{\mathfrak{h}} \sum_{j=1}^{\mathfrak{e}_i}  \alpha_{ij} u_{ij} $, then there exists an infinite subset $\mathcal{N}_3$ of $\mathcal{N}_{1}$ such that for all $(u_{ij})_{i,j} \in \mathcal{N}_3$, we have
\begin{equation}\label{eqn4.2}
p=\sum_{i=1}^{\mathfrak{h}} \sum_{j=1}^{\mathfrak{d}_i}  \eta_{ij} u_{ij},
\end{equation} with $\eta_{ij} \in L$ for $1 \leq i \leq \mathfrak{h}$ and $1 \leq j \leq \mathfrak{d}_i$.
\end{proposition}
\begin{proof} We can show that at least one of $H(u_i)\to \infty$ along the infinite set $\mathcal{N}_{1}$ of tuples ${\bf u}:=(u_1,\ldots,u_m)$.  We have the inequality 
\begin{equation}\label{eq.h(u)}
    H({\bf u})\leq H(u_1)\cdots H(u_m).
\end{equation} 
Suppose $H(u_i)$ is fixed for all $i \in \{1,\ldots,m\}$ for a tuple ${\bf u} \in \mathcal{N}_{1}$. Then by \eqref{eq.h(u)}, we get that $H({\bf u})$ is also fixed along $\mathcal{N}_{1}$, which is a contradiction to Northcott property. Hence we get $H(u_i)\to \infty$ for at least one $u_i$ with $i \in \{1,\ldots,m\}$ along the infinite set $\mathcal{N}_{1}$.
For $\beta \in L$, define $$|\beta|_{\mathit{v}}= |\sigma_{\mathit{v}}^{-1}(\beta)|^{d(\sigma_{\mathit{v}})/[L:\mathbb{Q]}},$$ where $\mathit{v} \in M_{L}^\infty$ corresponds to the embedding $\sigma_{\mathit{v}^{-1}}$. We assume for $1 \leq i \leq \mathfrak{h}$, $\sigma_{\mathit{v}i} $ as the induced permutation on $\{1,\ldots, \mathfrak{d}_i\}$, that is, $\sigma_{\mathit{v}}(u_{ij})=u_{i \sigma_{\mathit{v}i}(j) }$. Let $p$ be the closest integer to$\sum_{i=1}^{\mathfrak{h}} \sum_{j=1}^{\mathfrak{e}_i}  \alpha_{ij} u_{ij}$. Since $\displaystyle\sum_{\mathit{v}\in M_{L}^\infty}d(\sigma_\mathit{v})=[L:\mathbb{Q}]$, then
		\begin{align}
			\begin{split}\label{eq3.19}
				\frac{1}{(\prod_{i=1}^mH(u_i))^{\varepsilon_1}} & >  \left|\sum_{i=1}^{\mathfrak{h}} \sum_{j=1}^{\mathfrak{e}_i}  \alpha_{ij} u_{ij}-p\right|= \prod_{\mathit{v} \in M_{L}^\infty } \left|\sum_{i=1}^{\mathfrak{h}} \sum_{j=1}^{\mathfrak{e}_i}  \alpha_{ij} u_{ij}-p\right|^{d(\sigma_{\mathit{v}})/[L:\mathbb{Q}]} \\ &= \prod_{\mathit{v} \in M_{L}^\infty }\left|\sum_{i=1}^{\mathfrak{h}}\sum_{j=1}^{\mathfrak{e}_i}\sigma_{\mathit{v}}( \alpha_{ij} )u_{i \sigma_{\mathit{v}i}(j) }-p \right|_{\mathit{v}},
			\end{split}
		\end{align} for $(u_{ij})_{i,j} \in \mathcal{N}_1$.
		Let $$\mathcal{F}=\{(i,j_1,j_2): 1 \leq i \leq {\mathfrak{h}}, 1 \leq j_1 \leq \mathfrak{e}_i, 1 \leq j_2 \leq \mathfrak{d}_i\}.$$ For each $(u_{ij})_{i,j} \in  \mathcal{N}_{1}$, we associate a vector ${\bf y}:={\bf y}(u_{ij})_{i,j}$ whose components are indexed by $\mathcal{F}$ and defined by $y_{(i,j_1,j_2)}=\sigma_{\mathit{v}}( \alpha_{ij_1})u_{ij_2}$ for $(i,j_1,j_2) \in \mathcal{F} $. For $\mathit{v}\in M_{L}^\infty$ and ${\bf a}=(i,j_1,j_2) \in \mathcal{F}$, we define
		\begin{equation*}
			\delta_{\mathit{v},{\bf a}}=
			\begin{cases}
				1 & \text{if } \sigma_{\mathit{vi}}(j_1)=j_2,\\
				0 & \text{otherwise}.
			\end{cases}
		\end{equation*}
		Now we rewrite \eqref{eq3.19} as
		\begin{equation}\label{eq3.20}
			\prod_{\mathit{v} \in M_{L}^\infty}\left|\sum_{{\bf a} \in \mathcal{F}} \delta_{\mathit{v},{\bf a}}y_{\bf a}-p \right|_{\mathit{v}}<\frac{1}{(\prod_{i=1}^mH(u_i))^{\varepsilon_1}}.
		\end{equation}
		Let $\mathcal{N}_2$ be one of the infinite subsets of $\mathcal{N}_1$ such that the vector space over $L$ generated by the set $\{{\bf y}(u_{ij})_{i,j} : (u_{ij})_{i,j} \in \mathcal{N}_2 \}$ has minimal dimension. Let $V$ denote this vector space and let $r=\text{dim }V$ over $L$. By using Gaussian eliminations on the system of equations defining $V$, we obtain a subset $\mathcal{F^*}$ of $\mathcal{F}$ with $r$ elements such that $V$ is given by the linear system
		$$Y_{\bf a}=\sum_{{\bf b} \in \mathcal{F^*}} c_{{\bf a},{\bf b}} Y_{\bf b} $$ for ${\bf a} \in \mathcal{F} \backslash \mathcal{F^*}$ and $ c_{{\bf a},{\bf b}}\in L$. Hence we can write $y_{\bf a}=\sum_{{\bf b} \in \mathcal{F^*}}  c_{{\bf a},{\bf b}} y_{\bf b}$  for ${\bf a} \in \mathcal{F} \backslash \mathcal{F^*}$. So for any $\mathit{v} \in M_{L}^\infty$, 
		\[\sum_{{\bf a} \in \mathcal{F}} \delta_{\mathit{v},{\bf a}}y_{\bf a}=\sum_{{\bf b}\in \mathcal{F^*}} \bar{c}_{\mathit{v}, {\bf b} }  y_{\bf b}, \]where for ${\bf b} \in \mathcal{F^*}$, $\bar{c}_{\mathit{v}, {\bf b} } $ depends on $\mathcal{F},\mathcal{F^*}, \mathit{v}, \delta_{\mathit{v},{\bf a}}$ and $c_{{\bf a},{\bf b}}$. We rewrite \eqref{eq3.20} as,
		\begin{equation}\label{eq2.50}
			\prod_{\mathit{v} \in M_{L}^\infty}\left|\sum_{{\bf b} \in \mathcal{F}^*} \bar{c}_{\mathit{v}, {\bf b} }  y_{\bf b}-p \right|_{\mathit{v}}< \frac{1}{(\prod_{i=1}^mH(u_i))^{\varepsilon_1}}. 
		\end{equation} 
We choose the vector ${\bf X}$ whose first coordinate is $X_0$ and other coordinates $X_{\bf b}$ with ${\bf b} \in \mathcal{F^*}$.
Now we will apply the subspace theorem (Theorem \ref{schli}) to a set of $r+1$ linear forms denoted by $L_{w,0}({\bf X})$ and $L_{w,{\bf b}}({\bf X})$ for ${\bf b} \in \mathcal{F^*}$.
For $w \in S'\setminus M_{L}^{\infty}$, define 
$$L_{w,0}({\bf X})=X_0, L_{w,{\bf b}}({\bf X})=X_{\bf b}.$$
If $w \in M_L^{\infty}$, define \[L_{w,0}({\bf X})=\sum_{{\bf b} \in \mathcal{F}^*} \bar{c}_{\mathit{w}, {\bf b} } X_{\bf b}-X_0, \quad L_{w,{\bf b}}({\bf X})=X_{\bf b}.\] Clearly, we can see that these linear forms are linearly independent for every $w \in S'$.
		
For $(u_{ij})_{i,j} \in \mathcal{N}_2$, we choose the vector ${\bf x}$ whose coordinates are given by $x_0=p$ and $x_{\bf b}=y_{\bf b}$, with ${\bf b} \in \mathcal{F^*}$. In order to apply Theorem \ref{schli}, we need to calculate the following quantity
\[\prod_{w\in S'} \left(\frac{|L_{w,0}({\bf x})|_w}{\|{\bf x}\|_w}\prod_{{\bf b} \in \mathcal{F}^*} \frac{|L_{w,{\bf b}}({\bf x})|_w}{\|{\bf x}\|_w}\right).\]
		
For each ${\bf b} \in \mathcal{F}^*$, we define $L_{w,{\bf b}}({\bf x})=y_{\bf b}$. Using the fact that for $\mathit{v} \in S$, $\sigma_{\mathit{v}}(\alpha_{ij})$ and $u_{ij}$ are $S$-units for every pair $(i,j)$ and by the product formula
		
		\begin{equation}\label{eq.3.4}
			\prod_{w\in S'}\prod_{{\bf b} \in \mathcal{F}^*} {|L_{w,{\bf b}}({\bf x})|_w}=\prod_{w\in S'}\prod_{{(i,j_1,j_2) \in \mathcal{F}^*}}|\sigma_{w}( \alpha_{ij_1})u_{ij_2}|_w=1.
		\end{equation}
		By inequality \eqref{eq2.50}, \eqref{eq.3.4} and the integrality of $p$, we get
		\begin{align}\label{eq2.60}
			\begin{split}
				&\prod_{w\in S'} \left(\frac{|L_{w,0}({\bf x})|_w}{\|{\bf x}\|_w}\prod_{{\bf b} \in \mathcal{F}^*} \frac{|L_{w,{\bf b}}({\bf x})|_w}{\|{\bf x}\|_w}\right)\\
				&=\frac{\left(\prod_{w \in S' \cap M_{L}^0} |p|_w \right)\left( \prod_{w \in S'}\prod_{{\bf b} \in \mathcal{F^*}}|y_{\bf b}|_w  \right) \left( \prod_{w \in S' \cap M_{L}^\infty}\left|\sum_{{\bf b} \in \mathcal{F}^*} \bar{c}_{\mathit{w}, {\bf b} } y_{\bf b}-p\right|_w\right)}{(\prod_{w \in S'}\|{\bf x}\|_w)^{r+1}}\\&< \frac{1}{(\prod_{w \in S'}\|{\bf x}\|_w)^{r+1}(\prod_{i=1}^mH(u_i))^{\varepsilon_1}}.
			\end{split}
		\end{align}
		Also, $\prod_{w \in S'}\|{\bf x}\|_w=H({\bf x})$. From \eqref{eq3.19},  
		$$|p| \leq  \left|\sum_{i=1}^{\mathfrak{h}} \sum_{j=1}^{\mathfrak{e}_i}  \alpha_{ij} u_{ij}\right|+1 \leq 1+ m \max_{\substack{1\leq i\leq \mathfrak{h}\\ 1\leq j\leq \mathfrak{e}_i }}|\alpha_{ij}||u_{ij}|\leq 1+ m \max_{\substack{1\leq i\leq \mathfrak{h}\\ 1\leq j\leq \mathfrak{e}_i }}|\alpha_{ij}|H(u_{ij})^d,$$ 
and this implies 
\begin{equation}\label{eqnew101}
|p| \leq c_3(\alpha_{ij}, m)\max_{\substack{1\leq i\leq \mathfrak{h}\\ 1\leq j\leq \mathfrak{e}_i }}H(u_{ij})^{d}.
\end{equation}
Now we estimate $H({\bf x})$: 
\begin{align*}\label{eqnew102}
			\begin{split}
				H({\bf x})&=\prod_{w\in  M_L} \max\{|p|_w, |y_{{\bf b}}|_w: {\bf b} \in \mathcal{F}^*\} \leq |p| \prod_{w\in  M_L} \max\{1, |y_{{\bf b}}|_w: {\bf b} \in \mathcal{F}^*\}\\& \leq  |p| \prod_{w\in S'} \max_{\substack{1 \leq i \leq \mathfrak{h}, \\ 1 \leq j \leq \mathfrak{d}_i}}\{1, |\sigma_{w}( \alpha_{ij})|_w\}  \prod_{w\in S'} \max_{\substack{1 \leq i \leq \mathfrak{h}, \\ 1 \leq j \leq \mathfrak{d}_i}}\{1, |u_{ij}|_w\} =|p|\prod_{i,j}H(\alpha_{ij})\prod_{i,j}H(u_{ij}).\end{split}             
\end{align*} 
From \eqref{eqnew101}, we get
\begin{align*}
  H({\bf x})& < c_4(\alpha_{ij}, m)\Big(\prod_{i,j}H(u_{ij})\Big)^{2d}   
\end{align*}
Let $L_w$ denote the collection of all $r+1$ linear forms corresponding to $w$.
		Using \eqref{eq2.60} we get  
\begin{equation}\label{eq.04.09}
			\prod_{w \in S'} \prod_{L \in L_w} \frac{|L({\bf x})|_w}{\|{\bf x}\|_w}<\frac{1}{H({\bf x})^{r+1}(\prod_{i, j}H(u_{ij}))^{\varepsilon_1}}. 
\end{equation}
Hence \eqref{eq.04.09} implies that,
		$$\prod_{w \in S'} \prod_{L \in L_w} \frac{|L({\bf x})|_w}{\|{\bf x}\|_w}<\frac{1}{H({\bf x})^{r+1}(\prod_{i, j}H(u_{ij}))^{\varepsilon_1}}<H({\bf x})^{-r-1-\varepsilon_1/2d}.$$ 
Thus, by Theorem \ref{schli}, we conclude that there exists an infinite subset $\mathcal{N}_3$ of $\mathcal{N}_1$ such that,
		\begin{equation}
			a_0p+\sum_{{\bf b} \in \mathcal{F^*}} a_{\bf b} y_{\bf b}=0,
		\end{equation} with $a_0, a_{\bf b} \in L$. 
		First, observe that $a_0 \neq 0$. If $a_0 = 0$, then the non-trivial relation contradicts the minimality of $r$. Therefore,  $p=\sum_{{\bf b} \in \mathcal{F^*}}\frac{-a_{\bf b}}{a_0}y_{\bf b}$ and hence we deduce equation \eqref{eqn4.2}.
	\end{proof}
	
	\begin{proposition}\label{prop4.2}
		Let $\Gamma$, $\alpha_1,\ldots,\alpha_m$, $\varepsilon_1$, $\mathcal{N}_{1}$ and $p$ be as in Proposition \ref{prop4.1}. Then there exists an infinite subset $\mathcal{N}_{6}$ of $\mathcal{N}_{1}$ such that for every $(u_{ij})_{i,j} \in  \mathcal{N}_{6}$, the following holds.
		\begin{enumerate}[label=\textnormal{(\roman*)}]
			\item  For every $\sigma\in \mbox{Gal}(L/\mathbb{Q})$ and $1 \leq i \leq \mathfrak{h}$ , let $\sigma_{i}$ denote the induced permutation on $\{1,\ldots, \mathfrak{d}_i\}$. Then, $\sigma(\eta_{ij})=\eta_{i \sigma_{i}(j)}$ for $1 \leq i \leq \mathfrak{h}$ and $1\leq j \leq \mathfrak{d}_i$.
			\item $\alpha_{ij}=\eta_{ij}$ with $1 \leq i \leq \mathfrak{h}$, $1 \leq j \leq \mathfrak{e}_i$.
			\item  Let 
			$P$ be the collection of all $\beta \notin \{\alpha_{ij}u_{ij}: 1 \leq i \leq \mathfrak{h}, 1 \leq j \leq \mathfrak{e}_i \}$ and  $\beta \text{ is Galois conjugate to }\alpha_{ij}u_{ij}$ for some $1 \leq i \leq \mathfrak{h}, 1 \leq j \leq \mathfrak{e}_i$, then
			the elements of $P$ are exactly the elements $\eta_{ij}u_{ij}$ for $ 1 \leq i \leq \mathfrak{h}, \mathfrak{e}_i < j \leq \mathfrak{d}_i$.
		\end{enumerate}
	\end{proposition}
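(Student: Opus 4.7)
The plan is to combine the representation $p=\sum_{i=1}^{\mathfrak{h}}\sum_{j=1}^{\mathfrak{d}_i}\eta_{ij}u_{ij}$ from Proposition \ref{prop4.1} with the Galois-invariance of $p$, and then exploit the $S$-unit equation theorem (Lemma \ref{lem2.2}) together with the non-degeneracy of the tuples $(u_{ij})$ to force the coefficients $\eta_{ij}$ to be rigid.

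For part (i), I would start by fixing $\sigma\in\mbox{Gal}(L/\mathbb{Q})$ and applying $\sigma$ to the identity $p=\sum_{i,j}\eta_{ij}u_{ij}$. Since $p\in\mathbb{Z}$ we have $\sigma(p)=p$, and by the definition of $\sigma_i$ this yields
\[
p=\sum_{i=1}^{\mathfrak{h}}\sum_{j=1}^{\mathfrak{d}_i}\sigma(\eta_{ij})u_{i\sigma_i(j)}=\sum_{i=1}^{\mathfrak{h}}\sum_{k=1}^{\mathfrak{d}_i}\sigma(\eta_{i\sigma_i^{-1}(k)})u_{ik}.
\]
Subtracting from the original representation gives the relation
\[
\sum_{i=1}^{\mathfrak{h}}\sum_{j=1}^{\mathfrak{d}_i}\bigl(\eta_{ij}-\sigma(\eta_{i\sigma_i^{-1}(j)})\bigr)u_{ij}=0,
\]
which holds simultaneously for all $(u_{ij})_{i,j}\in \mathcal{N}_1$ produced by Proposition \ref{prop4.1}. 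Since the $u_{ij}$ vary in infinitely many tuples of a finitely generated $S$-unit group and, under assumptions $(P1),(P2)$ and Lemma \ref{lemma 4.1}, the family $(u_{ij})$ is non-degenerate, I would apply Lemma \ref{lem2.2} (restricting to an infinite subset $\mathcal{N}_0\subset\mathcal{N}_1$ if necessary so that no proper subsum vanishes on $\mathcal{N}_0$) to conclude that each coefficient vanishes, i.e.\ $\sigma(\eta_{i\sigma_i^{-1}(j)})=\eta_{ij}$, equivalently $\sigma(\eta_{ij})=\eta_{i\sigma_i(j)}$.

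For part (ii), I would combine the defining smallness condition of $\mathcal{N}_1$ with the expression for $p$: writing $\alpha_{ij}$ only for $j\leq \mathfrak{e}_i$, the inequality $|\sum_{i,j}\alpha_{ij}u_{ij}-p|<H({\bf u})^{-\epsilon_1}$ rearranges to
\[
\left|\sum_{i=1}^{\mathfrak{h}}\sum_{j=1}^{\mathfrak{e}_i}(\alpha_{ij}-\eta_{ij})u_{ij}-\sum_{i=1}^{\mathfrak{h}}\sum_{j=\mathfrak{e}_i+1}^{\mathfrak{d}_i}\eta_{ij}u_{ij}\right|<\frac{1}{H(\bf u)^{\epsilon_1}}.
\]
If any $\alpha_{ij}-\eta_{ij}\neq 0$ for $j\leq \mathfrak{e}_i$, I would apply the Subspace Theorem argument used in the proof of Proposition \ref{prop4.1} (essentially the same construction of linear forms with $p$ replaced by $0$) to extract yet another linear relation among the $y_{\bf b}$ over $L$, contradicting the minimality of the dimension $r$ chosen in that proof. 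Hence $\alpha_{ij}=\eta_{ij}$ for $1\leq i\leq \mathfrak{h}$, $1\leq j\leq \mathfrak{e}_i$.

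Finally, for part (iii), I would use (i) and (ii) together: for $j>\mathfrak{e}_i$, pick any $\tau\in\mbox{Gal}(L/\mathbb{Q})$ with $\tau_i(j_0)=j$ for some $j_0\leq \mathfrak{e}_i$. Then $\eta_{ij}u_{ij}=\tau(\eta_{ij_0})\tau(u_{ij_0})=\tau(\alpha_{ij_0}u_{ij_0})$, exhibiting $\eta_{ij}u_{ij}$ as a Galois conjugate of $\alpha_{ij_0}u_{ij_0}$; by property $(P1)$ together with Lemma \ref{lemma 4.1}, this conjugate differs from every $\alpha_{ik}u_{ik}$ with $k\leq\mathfrak{e}_i$, so $\eta_{ij}u_{ij}\in P$. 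Conversely, every element of $P$ arises as $\tau(\alpha_{ij_0}u_{ij_0})$ for some $\tau$ and $j_0\leq\mathfrak{e}_i$, and by (i)–(ii) this equals $\eta_{i\tau_i(j_0)}u_{i\tau_i(j_0)}$ with $\tau_i(j_0)>\mathfrak{e}_i$; thus the two sets coincide.

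The main obstacle I anticipate is step (i): carefully arranging Lemma \ref{lem2.2} so that no unwanted subsum vanishes, which forces the simultaneous passage to an infinite subset $\mathcal{N}_0$ on which every such linear relation is trivial. This requires a uniform choice that works for all $\sigma\in\mbox{Gal}(L/\mathbb{Q})$ at once, but since the Galois group is finite one can iterate the $S$-unit reduction finitely many times and still retain an infinite set. The remainder of the argument is a bookkeeping of Galois actions through the partition into equivalence classes.
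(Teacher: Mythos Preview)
Your arguments for parts (i) and (iii) are essentially the paper's: apply $\sigma$ to the identity $p=\sum\eta_{ij}u_{ij}$, subtract, and use Lemma~\ref{lem2.2} together with the non-degeneracy coming from Lemma~\ref{lemma 4.1}; then read off the Galois-conjugacy statement for the products $\eta_{ij}u_{ij}$.

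The gap is in part (ii). You propose to re-run the Subspace argument of Proposition~\ref{prop4.1} ``with $p$ replaced by $0$'' and derive a new relation among the $y_{\bf b}$ contradicting the minimality of $r$. This does not go through as written. The small quantity you now control is $\sum_{i,j}(\alpha_{ij}-\eta_{ij})u_{ij}$ (with $\alpha_{ij}:=0$ for $j>\mathfrak e_i$), not a linear form in the original coordinates $y_{\bf b}=\sigma_v(\alpha_{ij_1})u_{ij_2}$; to feed it back into the Proposition~\ref{prop4.1} machinery you would first have to rewrite it as a linear form in the $y_{\bf b}$, $b\in\mathcal F^*$, and then argue that this form is \emph{nonzero} on $V$ whenever some $\alpha_{ij}\neq\eta_{ij}$. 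That last step is exactly the heart of the matter and you do not address it; without it the Subspace conclusion may be the trivial relation, and nothing contradicts the minimality of $r$.

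The paper avoids this issue entirely: it applies Lemma~\ref{lem2.4} directly to the inequality
\[
\left|\sum_{(i,j)\in\mathcal T}(\alpha_{ij}-\eta_{ij})u_{ij}\right|<\frac{1}{H({\bf u})^{\epsilon_1}},
\qquad \mathcal T=\{(i,j):\alpha_{ij}\neq\eta_{ij}\},
\]
to produce a nontrivial linear relation among the $(\alpha_{ij}-\eta_{ij})u_{ij}$ on an infinite subset, and then invokes Lemma~\ref{lem2.2} and the non-degeneracy of the $u_{ij}$ (Lemma~\ref{lemma 4.1}) to get a contradiction. In other words, the contradiction comes from the $S$-unit equation theorem applied to the $u_{ij}$ themselves, not from the dimension count in Proposition~\ref{prop4.1}. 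Replacing your sketch for (ii) by this Lemma~\ref{lem2.4}/Lemma~\ref{lem2.2} step makes the proof complete.
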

	\begin{proof}
From equation \eqref{eqn4.2}, we have 
\begin{equation*}\label{eq.4.9}
			p=\sum_{i=1}^{\mathfrak{h}} \sum_{j=1}^{\mathfrak{d}_i}  \eta_{ij} u_{ij},
\end{equation*}
with $\eta_{ij} \in L$ for $1 \leq i \leq \mathfrak{h}$ and $1 \leq j \leq \mathfrak{d}_i$. Define $\alpha_{ij}=0$ for $1\leq i\leq \mathfrak{h}, \mathfrak{e}_i <j \leq \mathfrak{d}_i$. For every $(u_{ij})_{i,j} \in  \mathcal{N}_{1}$, 
		$$
		\left\|\sum_{i=1}^m \alpha_i u_i \right\|< \frac{1}{(\prod_{i=1}^mH(u_i))^{\varepsilon_1}},
		$$ and this implies, 
		$$
		\left|\sum_{i=1}^{\mathfrak{h}} \sum_{j=1}^{\mathfrak{e}_i}  \alpha_{ij} u_{ij}-p\right|=\left|\sum_{i=1}^{\mathfrak{h}} \sum_{j=1}^{\mathfrak{d}_i}  (\alpha_{ij}-\eta_{ij}) u_{ij}\right|< \frac{1}{(\prod_{i=1}^mH(u_i))^{\varepsilon_1}}.
		$$ 
		Let $\sigma \in \mbox{Gal}({L}/\Q)$. Then
		\begin{align*}\label{eq.2.9}
			\begin{split}
				\sigma(p)&=p=\sum_{i=1}^{\mathfrak{h}} \sum_{j=1}^{\mathfrak{d}_i}  \sigma(\eta_{ij}) \sigma(u_{ij})=\sum_{i=1}^{\mathfrak{h}} \sum_{j=1}^{\mathfrak{d}_i}  \sigma(\eta_{ij}) u_{i \sigma(j)}=\sum_{i=1}^{\mathfrak{h}} \sum_{j=1}^{\mathfrak{d}_i}  \sigma(\eta_{i \sigma^{-1}_{ i}(j)}) u_{ij}.
			\end{split}
		\end{align*}
		Using \eqref{eqn4.2}, we get
		$$\sum_{i=1}^{\mathfrak{h}} \sum_{j=1}^{\mathfrak{d}_i}(\eta_{ij}- \sigma(\eta_{i \sigma^{-1}_{ i}(j)})) u_{ij}=0.$$ 
		Let $\mathcal{N}_{4}$ be the collection of all elements $(u_{ij})_{i,j}$ with $1 \leq i \leq \mathfrak{h}$, $1\leq j \leq \mathfrak{d}_i$ and $\eta_{ij} \neq \sigma(\eta_{i \sigma^{-1}_{ i}(j)})$. 
        By Lemma \ref{lem2.2} together with assumption that for any two tuples $(u_1,\ldots,u_m)$ and $(u'_1,\ldots,u'_m)$, we have $\frac{u_{i}}{u_j}\neq \frac{u'_i}{u'_j}$ for $1\leq i\neq j\leq m$, we will get a contradiction. Hence, there exists an infinite subset of $\mathcal{N}_{4}$ such that $\eta_{ij} =\sigma(\eta_{i \sigma^{-1}_{i}(j)})$. This completes the proof of part $(i)$.
		\par
		Let $\mathcal{N}_{5}$ be the collection of $(u_{ij})_{i,j}$ such that  $\alpha_{ij} \neq \eta_{ij}$ for $1 \leq i \leq \mathfrak{h}$, $1 \leq j \leq \mathfrak{e}_i$. We will prove that $\mathcal{N}_{5}$ is finite. Assume that $\mathcal{N}_{5}$ is infinite. Let $$\mathcal{T}=\{(i,j): 1 \leq i \leq \mathfrak{h}, 1 \leq j \leq \mathfrak{d}_i, \alpha_{ij} \neq \eta_{ij}\},$$ for infinitely many $(u_{ij})_{i,j} \in  \mathcal{N}_{5}$.  Let ${\bf x}$ denote the vector having coordinates $u_{ij}$ with $(i,j) \in \mathcal{T}$. By using \eqref{eqnew4.1} and \eqref{eq4.1}, we can find a constant $c_5>1$ depending only on the generators $\gamma_1, \ldots, \gamma_s$ and $K$ such that  $H({\bf x})<c_5^{\mathfrak{n}}$. Since $|u_{ij}|>1$, we get
		\begin{equation}
			\left|\sum_{(i,j) \in \mathcal{T} }(\alpha_{ij}-\eta_{ij})u_{ij}\right|< \frac{1}{(\prod_{i=1}^mH(u_i))^{\varepsilon_1}}< \frac{\max\{|u_{ij}|: (i,j) \in \mathcal{T}\}}{H(\bf {\bf x})^{\varepsilon}},
		\end{equation}
		for some $\varepsilon>0$ and for infinitely many $(u_{ij})_{i,j} \in  \mathcal{N}_{5}$. 
		Proceeding as before by  Lemma \ref{lem2.2} together with the hypothesis that for any two tuples $(u_1,\ldots,u_m)$ and $(u'_1,\ldots,u'_m)$, we have $\frac{u_i}{u_j}\neq \frac{u'_i}{u'_j}$, we will get a contradiction. Hence $\mathcal{N}_{5}$ is finite. Therefore, $\alpha_{ij}=\eta_{ij}$ with $1 \leq i \leq \mathfrak{h}$, $1 \leq j \leq \mathfrak{e}_i$ for infinitely many elements $(u_{ij})_{i,j}$. This completes the proof of part $(ii)$.
		\par
		For  $ 1 \leq i \leq \mathfrak{h}, \mathfrak{e}_i < j \leq \mathfrak{d}_i$, there is a $\sigma \in \mbox{Gal}({L}/\Q) $ such that $\sigma(u_{i1})=u_{ij};$ thus $\sigma_{ i}(1)=j.$ Hence 
		$$\sigma(\alpha_{i1}u_{i1})=\sigma(\eta_{i1}u_{i1})=\eta_{ij}u_{ij}.$$ 
		So $\eta_{ij}u_{ij} \in P$. 
		\par 
		Conversely, if $\beta \in P$, then $\beta=\sigma(\alpha_{ij}u_{ij})$ for some $1 \leq i \leq \mathfrak{h}, 1 \leq j \leq \mathfrak{e}_i$. Then $\beta=\eta_{i \sigma_{ i}(j)}u_{i \sigma_{ i}(j)}$. Thus, we must have $\mathfrak{e}_i <\sigma_{ i}(j)\leq \mathfrak{d}_i$. Also $\eta_{ij}u_{ij}$'s are distinct for infinitely many elements $(u_{ij})_{i,j}$.
		Now let $\mathcal{N}_{6}$ be the infinite subset of $\mathcal{N}_{1}$ satisfying all the above three properties. This finishes the proof of the proposition.
	\end{proof}
	\subsection{Proof of Part (i) of Theorem \ref{thm2}}
	Assume the contrary that (i) does not hold. Without loss of generality, assume that $u_{11}$ is not an algebraic integer along the infinite set of tuples ${\bf u}$. We can find a non-archimedean place $w \in S'$ such that, $|u_{11}|_w>1$. Let $p$ be the nearest integer to $\sum_{i=1}^m \alpha_i u_i $. Then using Proposition \ref{prop4.1}, there exists an infinite subset $\mathcal{N}_3$ of $\mathcal{N}_1$ such that for all $(u_{ij})_{i,j} \in \mathcal{N}_3$, we have
	$$|p|_w=\left|\sum_{i=1}^{\mathfrak{h}} \sum_{j=1}^{\mathfrak{d}_i}  \eta_{ij} u_{ij}\right|_w \leq 1.$$ 
For every $1< \kappa< |u_{11}|_w^{1/\mathfrak{n}}$, we have 
	$$\max\{|u_{ij}|_w: 1 \leq i \leq \mathfrak{h}, 1 \leq j\leq \mathfrak{d}_i\} \geq |u_{11}|_w \geq \kappa^{\mathfrak{n}},$$ 
where $\mathfrak{n}$ is defined as in \eqref{eq4.1}.  Let ${\bf x}$ denote the vector having coordinates $u_{ij}$ with $1 \leq i \leq \mathfrak{h}, 1 \leq j\leq \mathfrak{d}_i$.
From \eqref{eqnew4.1} and \eqref{eq4.1}, we get 
\[H({\bf x})\leq (H(\gamma_1)\cdots H(\gamma_s))^{\mathfrak{n}\mathfrak{h}\mathfrak{d}_i}.\]
Choose $0< \varepsilon<\frac{\log \kappa}{\mathfrak{h}\mathfrak{d}_i\log (H(\gamma_1)\cdots H(\gamma_s))}$, so that
\[\kappa^\mathfrak{n}>(H(\gamma_1)\cdots H(\gamma_s))^{\mathfrak{n}\mathfrak{h}\mathfrak{d}_i\varepsilon}.\]
Hence we get 
	$$|p|_w \leq 1< \frac{{|u_{11}|_w}}{\kappa^{\mathfrak{n}}}<\frac{\max\{|u_{ij}|_w: 1 \leq i \leq \mathfrak{h}, 1 \leq j\leq \mathfrak{d}_i\}}{H({\bf x})^{\varepsilon}}.$$
	holds.
	Then by Lemma \ref{lem2.4}, we get a non-trivial linear relation among  $u_{ij}$ for infinitely many $(u_{ij})_{ij}$ (with $1 \leq i \leq \mathfrak{h}, 1 \leq j\leq \mathfrak{d}_i$). By Lemma \ref{lem2.2}, there exists an infinite subset $\mathcal{N}_4$ of $\mathcal{N}_3$ and a non-trivial relation of the form 
    $$
     a u_{i,j}+b u_{i',j'}=0
    $$
    holds for all $(u_{i,j}; 1 \leq i \leq \mathfrak{h}, 1 \leq j\leq \mathfrak{d}_i)$ along the tuples $(u_1, \ldots, u_m, q, p) \in \mathcal{N}_4$. So for the tuples  \[(u_{i,j}^{(1)}; 1 \leq i \leq \mathfrak{h}, 1 \leq j\leq \mathfrak{d}_i)\quad \mbox{and} \quad (u_{i,j}^{(2)}; 1 \leq i \leq \mathfrak{h}, 1 \leq j\leq \mathfrak{d}_i)\] we have   
		\begin{equation*}
			\left(\frac{u_{i,j}^{(1)}}{u_{i',j'}^{(1)}}\right)\left(\frac{u_{i,j}^{(2)}}{u_{i',j'}^{(2)}}\right)^{-1}=1,
		\end{equation*}
		which contradicts the hypothesis that for any two tuples $(u_1,\ldots,u_m)$ and $(u'_1,\ldots,u'_m)$, we have $\frac{u_i}{u_j}\neq \frac{u'_i}{u'_j}$. Hence, we conclude that  each $(u_{ij})_{ij}$ is an algebraic integer.
	
	\subsection{Proof of Part (ii) of Theorem \ref{thm2}} 
Assume that
$\frac{\sigma(u_i)}{u_j} \notin \mu $ for $i,j=1,\ldots, m$. Then we have $\sigma(u_i)=u_{i'j'}$, for some $i'$ and $j'$ such that $1 \leq i' \leq \mathfrak{h}$, $\mathfrak{e}_i'+1 \leq j' \leq \mathfrak{d}_{i'}$.
So we need to prove $|u_{i'j'}|<1$ for $1 \leq i' \leq \mathfrak{h}$, $\mathfrak{e}_i'+1 \leq j' \leq \mathfrak{d}_{i'}$.
	
Assume that for some $1 \leq i' \leq \mathfrak{h}$, $\mathfrak{e}_i'+1 \leq j' \leq \mathfrak{d}_{i'}$, $|u_{i'j'}|> 1 $. If $p$ denotes the nearest integer to $\sum_{i=1}^{\mathfrak{h}} \sum_{j=1}^{\mathfrak{e}_i}  \alpha_{ij} u_{ij}$, then by Proposition \ref{prop4.1}, there exists an infinite subset $\mathcal{N}_3$ of $\mathcal{N}_1$ such that for all $(u_{ij})_{i,j} \in \mathcal{N}_3$, we have
	
\begin{equation}\label{eq.2.8}
p=\sum_{i=1}^{\mathfrak{h}} \sum_{j=1}^{\mathfrak{d}_i}  \eta_{ij} u_{ij}.
\end{equation} 
So, for all $(u_{ij})_{i,j} \in \mathcal{N}_3$,
$$ \left|\sum_{i=1}^{\mathfrak{h}} \sum_{j=1}^{\mathfrak{e}_i}  \alpha_{ij} u_{ij}-p\right|=\left|\sum_{i=1}^{\mathfrak{h}} \sum_{j=\mathfrak{e}_i+1}^{\mathfrak{d}_i}  \eta_{ij} u_{ij}\right|<\frac{1}{(\prod_{i=1}^mH( u_i))^{\varepsilon_1}}.$$
Choose ${\bf x}$ as the vector having  coordinates $u_{ij}$ with $1 \leq i\leq \mathfrak{h}$ and $\mathfrak{e}_i+1 \leq j\leq \mathfrak{d}_i$. Since $u_{ij}$'s are finitely generated, so we can find a constant $c_6>1$ depending only on the generators $\gamma_1, \ldots, \gamma_s$ and $K$ such that 
$$H({\bf x})<c_6^{\mathfrak{n}},$$ where $\mathfrak{n}$ is given in \eqref{eq4.1}. Now choose $0<\varepsilon< \frac{m\epsilon_1 \log (H(\gamma_1)\cdots H(\gamma_s))}{\log c_6}$ so that $\frac{1}{(\prod_{i=1}^mH(u_{i}))^{\varepsilon_1}} <{c_6^{-\mathfrak{n}\varepsilon}}.$ Hence we get 
$$\frac{1}{(\prod_{i=1}^mH( u_i))^{\varepsilon_1}}< \frac{\max\{|u_{ij}|: 1 \leq i \leq \mathfrak{h}, \mathfrak{e}_i+1 \leq j\leq \mathfrak{d}_i\}}{H({\bf x})^{\varepsilon}}.$$ Lemma \ref{lem2.4} gives a non-trivial relation among $u_{ij}$'s for infinitely many elements 
$(u_{ij})_{i,j}$. Lemma \ref{lem2.2} together with the hypothesis that the for any two tuples $(u_1,\ldots,u_m)$ and $(u'_1,\ldots,u'_m)$, we have $\frac{u_i}{u_j}\neq \frac{u'_i}{u'_j}$, lead to  a contradiction.  Hence $|u_{i'j'}| <1$ for $1 \leq i' \leq \mathfrak{h}$ and $\mathfrak{e}_i'+1 \leq j' \leq \mathfrak{d}_{i'}$. 
\subsection{Proof of Part (iii) of Theorem \ref{thm2}}  
Now we will prove that, the tuple $(\alpha_1 u_1,\ldots, \alpha_m u_m)$ is pseudo-Pisot for all but finitely many elements of $\mathcal{N}_{1}$. Assume there is an infinite subset $\mathcal{N}_{7}$ of $\mathcal{N}_{1}$ such that the tuple $(\alpha_1 u_1,\ldots, \alpha_m u_m)$ is not pseudo-Pisot for every $(u_{ij})_{i,j} \in  \mathcal{N}_{7}$.
If $p$ denotes the nearest integer to $\sum_{i=1}^{\mathfrak{h}} \sum_{j=1}^{\mathfrak{e}_i}  \alpha_{ij} u_{ij}$, then there exists an infinite subset $\mathcal{N}_3$ of $\mathcal{N}_1$ such that for all $(u_{ij})_{i,j} \in \mathcal{N}_3$, we have
$$
p=\sum_{i=1}^{\mathfrak{h}} \sum_{j=1}^{\mathfrak{d}_i}  \eta_{ij} u_{ij}.
$$
Also from \eqref{eq.1.01}, 
\begin{equation}\label{eq.04.15}
\left|\sum_{i=1}^{\mathfrak{h}} \sum_{j=1}^{\mathfrak{e}_i}  \alpha_{ij} u_{ij}-p\right|< \frac{1}{(\prod_{i=1}^mH(u_{i}))^{\varepsilon_1}}.    
\end{equation}
Let  $\mathcal{N}_{6}$ be an infinite subset of $\mathcal{N}_{7}$ such that for all elements of $\mathcal{N}_{6}$, Proposition (\ref{prop4.2}) holds.
Substituting the value of $p$ in \eqref{eq.04.15} and using Proposition \ref{prop4.2}(ii), we get
$$ \left|\sum_{i=1}^{\mathfrak{h}} \sum_{j=\mathfrak{e}_i+1}^{\mathfrak{d}_i}  \eta_{ij} u_{ij}\right|< \frac{1}{(\prod_{i=1}^mH(u_{i}))^{\varepsilon_1}}.$$ Now we claim that
$$\max\{|u_{ij}|: 1 \leq i \leq \mathfrak{h}, \mathfrak{e}_i+1 \leq j \leq \mathfrak{d}_i\}<1,$$ for all but finitely many  $(u_{ij})_{i,j} \in  \mathcal{N}_{6}$. To prove this, assume $$\max\{|u_{ij}|: 1 \leq i \leq \mathfrak{h}, \mathfrak{e}_i+1 \leq j \leq \mathfrak{d}_i\} \geq 1,$$ for infinitely many $(u_{ij})_{i,j} \in  \mathcal{N}_{6}$. 
As in the proof of part(ii), if we choose ${\bf x}$ to be the vector having  coordinates $u_{ij}$ with $1 \leq i\leq \mathfrak{h}$ and $\mathfrak{e}_i+1 \leq j\leq \mathfrak{d}_i$, we get $H({\bf x})<c_6^{\mathfrak{n}}$.
Proceeding as above, we can find an $\varepsilon$ such that,
$$\frac{1}{(\prod_{i=1}^mH(u_{i}))^{\varepsilon_1}} <{c_6^{-\mathfrak{n}\varepsilon}}.$$
Thus
$$ \left|\sum_{i=1}^{\mathfrak{h}} \sum_{j=\mathfrak{e}_i+1}^{\mathfrak{d}_i}  \eta_{ij} u_{ij}\right|< \frac{1}{(\prod_{i=1}^mH(u_{i}))^{\varepsilon_1}}<\frac{\max\{|u_{ij}|: 1 \leq i \leq \mathfrak{h}, \mathfrak{e}_i+1 \leq j \leq \mathfrak{d}_i\} }{H({\bf x})^{\varepsilon}}.$$
Using Lemma \ref{lem2.4}, we get a non-trivial relation among $u_{ij}$, for infinitely many elements $(u_{ij})_{i,j} \in  \mathcal{N}_{6}$. Using Lemma \ref{lem2.2} and together with the hypothesis that for any two tuples $(u_1,\ldots,u_m)$ and $(u'_1,\ldots,u'_m)$, we have $\frac{u_i}{u_j}\neq \frac{u'_i}{u'_j}$, we will get a contradiction. So, $\max\{|u_{ij}|: 1 \leq i \leq \mathfrak{h}, \mathfrak{e}_i+1 \leq j \leq \mathfrak{d}_i\}<1$. Thus, using the above claim and Proposition \ref{prop4.2}(iii), we conclude that, $(\alpha_1 u_1,\ldots, \alpha_m u_m)$ is pseudo-Pisot for infinitely many $(u_{ij})_{i,j} \in  \mathcal{N}_{1}$. This contradicts the choice of $\mathcal{N}_{7}$. This completes the proof of part (iii).
\subsection{Proof of Part (iv) of Theorem \ref{thm2}}
Suppose $\frac{\sigma(u_{i_1j_1})}{u_{i_2j_2} }\in \mu$, for $1\leq i_1, i_2 \leq \mathfrak{h}, 1\leq j_1 \leq \mathfrak{e_{i_1}}, 1\leq j_2 \leq \mathfrak{e_{i_2}}$. Our claim is to prove
$$\sigma(\alpha_{i_1j_1}u_{i_1j_1})=\alpha_{i_2j_2}u_{i_2j_2},$$
for all but finitely many $(u_{ij})_{ij}\in \mathcal{N}_1$, for $1\leq i \leq \mathfrak{h}, 1\leq j \leq \mathfrak{e_i}$. Suppose that there exists an infinite subset $\mathcal{N}_{8}$  of $\mathcal{N}_1$ such that $\sigma(\alpha_{i_1j_1}u_{i_1j_1}) \neq \alpha_{i_2j_2}u_{i_2j_2},$
for all $(u_{ij})_{ij}\in \mathcal{N}_{8}$. Since $\sigma(u_{i_1j_1})/u_{i_2j_2} \in \mu$, then $u_{i_1j_1} \sim u_{i_2j_2}$. Now property (P2) implies that $\rho(u_{i_1j_1})=u_{i_2j_2}$ for some $\rho \in \mbox{Gal}(\bar{\mathbb{Q}}/\mathbb{Q})$. Thus, we have $\sigma(u_{i_1j_1})/\rho(u_{i_1j_1}) \in \mu$. That is, 
$$\frac{(\rho^{-1}\circ \sigma)(u_{i_1j_1})}{u_{i_1j_1}} \in \mu.$$
Hence, by property (P1), we get $\sigma(u_{i_1j_1})=\rho(u_{i_1j_1})=u_{i_2j_2}$. But we know that,
$\sigma(u_{i_1j_1})=u_{i_1\sigma_{i_1}(j_1)}$.
Thus $u_{i_1\sigma_{i_1}(j_1)}=u_{i_2j_2}$. This implies $i_1=i_2$ and $\sigma_{i_1}(j_1)=j_2$. Let $\mathcal{N}_{6}$ be an infinite subset of $\mathcal{N}_{8}$, which satisfies Proposition \ref{prop4.2}. So, by property $(ii)$ of Proposition \ref{prop4.2}, we get 
\begin{equation*}
\begin{split}
\sigma(\alpha_{i_1j_1}u_{i_1j_1})=\sigma(\eta_{i_1j_1}u_{i_1j_1})=\eta_{i_1\sigma_{i_1}(j_1)}u_{i_1\sigma_{i_1}(j_1)}=\eta_{i_2j_2}u_{i_2j_2}=\alpha_{i_2j_2}u_{i_2j_2}.    
		\end{split}
	\end{equation*}
	But the elements of the set $\mathcal{N}_{8}$ do not satisfy this equation. Hence, our claim. 
	\smallskip
For the moreover part, we argue as follows: let $\sigma\in \mbox{Gal}(\overline{\mathbb{Q}}/\mathbb{Q})$ and a pair $(i,j)\in \{1,\ldots,m\}^2$ such that $\sigma(\alpha_i u_i)=\alpha_j u_j$ holds for all but finitely many $(u_i, u_j)$ along the tuples $(u_1,\ldots,u_m)\in\mathcal{N}_1$. Let $(u_1,\ldots,u_m)$ and $(u_1',\ldots,u'_m)$ be two tuples, then from the  relation  $\frac{\sigma(u_i)}{u_j}=\frac{\alpha_j}{\sigma(\alpha_i)}$ for all but many tuples $(u_1,\ldots,u_m)$, we conclude $\frac{\sigma(u_i)}{u_j}=\frac{\sigma(u'_i)}{u'_j}$. This completes the proof of Theorem \ref{thm2}.
	\qed
	
	\section*{Acknowledgments} The research of the first author is supported by a UGC fellowship (Ref No. 221610077314). This work was completed when the first author visited IIT Dharwad in April 2025, and she thanks the people of this institute for their hospitality and support. The second author's research is supported by NBHM research grant and ANRF-PMECRG grant ANRF/ECRG/2024/002315/PMS. S.S.R. is supported by a grant from the Anusandhan National Research Foundation (File No.:CRG/2022/000268) while working on this project.

\end{document}